\newtheorem{theorem}{Theorem}
\newtheorem{lemma}[theorem]{Lemma}
\theoremstyle{definition}
\newtheorem{definition}{Definition}
\newtheorem{remark}[definition]{Remark}
\newtheorem{remarks}[definition]{Remarks}
\def\CF{{\cal F}}
\def\CL{{\cal L}}
\def\qed{\hfill$\sqcap\kern-8.0pt\hbox{$\sqcup$}$\\}
\def\beq{\begin{eqnarray}}
\def\eeq{\end{eqnarray}}
\def\beqq{\begin{eqnarray*}}
\def\eeqq{\end{eqnarray*}}
\def\be{\begin{equation}}
\def\ee{\end{equation}}
\newcommand{{\X}}{{\bf X}}
\newcommand{{\x}}{{\bf x}}
\newcommand{{\Z}}{{\bf Z}}
\newcommand{{\z}}{{\bf z}}
\newcommand{{\Y}}{{\bf Y}}
\newcommand{{\y}}{{\bf y}}
\newcommand{{\F}}{{\bf F}}
\newcommand{{\bbeta}}{{\bf \beta}}
\newcommand{{\bsigma}}{{\bf \sigma}}
\newcommand{{\bL}}{{\bf L}}
\newcommand{{\bW}}{{\bf W}}
\title{   On the first passage time for  Brownian motion subordinated by a Levy process 
}
\author{T. R. Hurd$^{\;1\; *}$ and A. Kuznetsov$\;{}^2$ \thanks{{Research supported by the
Natural Sciences and Engineering Research Council of Canada and
MITACS Mathematics of Information Technology and Complex Systems,
Canada}} \\ ${}^1$\;Dept. of Mathematics and Statistics\\ McMaster
University\\Hamilton ON L8S 4K1\\Canada
 \\ \\
${}^2$\;Dept. of Mathematical Sciences\\  University of New
Brunswick
\\Saint John NB E2L 4L5 \\  Canada \\
 }\date{May 21, 2008}
\begin{document}
\maketitle

\begin{abstract}
This paper considers the class of L\'evy processes that can be written as a Brownian motion time changed by an independent L\'evy subordinator. Examples in this class include the variance gamma model, the normal inverse Gaussian model, and other processes popular in financial modeling. The question addressed is the precise relation between the standard first passage time and an alternative notion, which we call first passage of the second kind, as suggested by \cite{Hurd07a} and others. We are able to prove that standard first passage time is the almost sure limit of iterations of first passage of the second kind. Many different problems arising in financial mathematics are posed as first passage problems, and motivated by this fact, we are lead to consider the implications of the approximation scheme for fast numerical methods for computing first passage. We find that the generic form of the iteration can be competitive with other numerical techniques.  In the particular case of the VG model, the scheme can be further refined to give very fast algorithms.

\bigskip
\noindent {\bf Key words:} Brownian motion, first passage, time change, L\'evy subordinators, stopping times, financial models.

 \end{abstract}

 \section{Introduction}
 First passage problems are a classic aspect of stochastic processes that arise in many areas of application. In mathematical finance, for example, first passage problems lie at the heart of such issues as credit risk modelling, pricing barrier options, and the optimal exercise of american options. If $X_t$ is any process with initial value $X_0=x_0$, the first passage time to a lower level $b$ is defined to be the stopping time
 \be t^*_b(x_0)=\inf\{t\ge 0|X_t\le b\}\ee
The distributional properties of $t^*$ are well studied when the underlying process $X$ is a diffusion, but when $X$ has jumps, for example, for L\'evy processes, much less is known. 
 
 Explicit formulas  for first passage are known only for certain special L\'evy processes, specifically the Kou-Wang jump diffusion model \cite{KouWang03} and its generalization to phase type processes \cite{AsmAvrPis04}. 
 In the setting of general L\'evy processes, one analytical approach to first passage is the Wiener-Hopf method, described in  \cite{AlilKypr05}. This method can for example compute first passage problems for processes with one-sided jumps.  A second general approach to first passage is to solve the Fokker-Planck equation for the probability density of $X_t$ conditioned on the set $\{ t^*>t\}$.  For L\'evy processes this amounts to solving a certain  linear partial integral differential equation (PIDE) with Dirichlet boundary conditions. The PIDE approach to first passage in the general setting has apparently not been widely implemented: it seems that when faced with difficult first passage problems, practitioners often fall back on Monte Carlo methods. 
 
  Our purpose here is to present a new approach to first passage problems applicable whenever the underlying L\'evy process can be realized as a L\'evy subordinated Brownian motion (LSBM), that is, whenever $X$ can be constructed as $\tilde W\circ T$ where $\tilde W$ is a standard drifting Brownian motion and $T$ is a non-decreasing L\'evy process independent of $\tilde W$. The class of L\'evy processes that are realizable as LSBMs  is broad enough to include most of the L\'evy processes that have so far been used in finance, such as the Kou-Wang model, the variance gamma (VG) model, and the normal inverse Gaussian (NIG) model. 
 
 The basis for our approach is that for processes that are realizable as time changes of Brownian motions, there is an alternative notion that is also relevant, namely, the first time the time change exceeds the first passage time of the Brownian motion. This notion, called {\em first passage of the second kind} in \cite{Hurd07a}, shares some characteristics with the usual first passage time and can be applied in a similar way. The usefulness of this new concept is that it can be computed efficiently in many cases where the usual first passage time cannot.  
 
In the present paper, we study first passage for LSBMs and show how the first passage of the second kind is the first of a sequence of stopping times that converges almost surely to the first passage time. Expressed differently, first passage can be viewed as a stochastic sum of first passage times of the second kind. This sequence leads to a convergent and computable expansion for the first passage probability distribution function $p^*$ in terms of a similar function $p^*_1$ that describes the first passage distribution of the second kind. The outline of the paper is as follows. In Section 2, we define the objects needed to understand first passage time, and prove the expansion formula for first passage. In Section 3, we demonstrate the usefulness of this expansion by proving several explicit two dimensional integral formulas for $p^*_1$, the first passage distribution of the second kind. Section 4 provides two proofs of the convergence of the expansion.    The first proof is a proof of convergence in distribution, the second is in the pathwise (almost sure) sense.  Section 5 focusses on the special case of the variance gamma (VG) model. In this important example, the formula for $p^*_1$ is reduced to a one-dimensional integral (involving the exponential integral function),. In Section 6,  the expansion  of the function $p^*$ is studied numerically, and found to be numerically stable and efficient.

 \section{First Passage for LSBMs}
 Let $X_t$ be a general L\'evy process with initial value $X_0=x_0$ and characteristics $(b,c,\nu)_h$ with respect to a truncation function $h(x)$ (see \cite{Applebaum04} or \cite{JacoShir87}). This means $X$ is an infinitely divisible process with identical independent increments and c\'adl\'ag paths almost surely.  $b,c\ge 0$ are real numbers and $\nu$ is a sigma finite measure on $\mathbb{R}\setminus 0$ that integrates the function $1\wedge x^2$. By the L\'evy-Khintchine formula, the log-characteristic function of $X_t$ is 
 \be\label{LKformula}
 \log E[e^{iuX_t}]=iub-cu^2/2+\int_{\mathbb{R}\setminus 0}\left(e^{iux}-1-xh(x)\right)\nu(dx).
 \ee
 In what follows we will find it convenient to focus on the Laplace exponent of $X$: \[ \psi_X(u):=\log E[e^{-uX_1}]\]
For simplicity of exposition, we specialize slightly by assuming that $\nu$ is continuous with respect to Lebesgue measure $\nu(dx)=\nu(x) dx$, and  integrates $1\wedge |x|$, allowing us to take $h(x)=0$. 
 In this setting, the Markov generator of the process $X_t$ applied to any sufficiently smooth  function $f(x)$ is 
 \be\label{Markovgenerator}
 [\CL f](x)=b\partial_xf+\frac{c}2\partial^2_{xx}f+\int_{\mathbb{R}\setminus 0}\left(f(x+y)-f(x)\right)\nu(y)dy
 \ee

 \begin{definition}
For any $b\in\mathbb{R}$, the random variable $t^*_b=t^*_b(x_0):=\inf\{t\ |\ X_t\le b\}$ is called {\em the first passage time for level $b$}. When $b=0$, we drop the subscript and  $t^*:=\inf\{t\ |\ X_t\le 0\}$ is called simply {\em the first passage time of $X$}. 
\end{definition}

\begin{remarks}\begin{enumerate}
  \item Since distributions of the increments of $X$ are invariant under time and state space shifts, we can reduce computations of $t^*_b(x_0)$ to computations of $t^*(x_0-b)$. 
  \item A general L\'evy process is a mixture of a continuous Brownian motion with drift and a pure jump  process, and $t^*$ is the minimum of a predictable stopping time (coming from the diffusive part) and a totally inaccessible stopping time (coming from the down jumps). Only if $\mbox{supp}(\nu)\subset\mathbb{R}_+$ is $t^*$ predictable.  If $c=0$ or if $\nu$ has an infinite activity of down jumps (i.e. if $\nu(\mathbb{R}_-)=\infty$), then $t^*$ is  totally inaccessible.
  \item When $X_{t^*}-X_{t^*-}\ne 0$, we say that $X$ jumps across $0$, and define the {\em overshoot} to be $X_{t^*}$. 
\end{enumerate}

\end{remarks}

The central object of study in this paper is the joint distribution of $t^*$ and the overshoot $X_{t^*}$, in particular the joint probability density function
\be\label{jointpdf2}
p^*(x_0;s,x_1)=E_{x_0}[\delta(t^*-s)\delta(X_{t^*}-x_1)]\ee 
The marginal density of $t^*$ is $p^*(x_0;s)=\int_{-\infty}^0\ p^*(x_0;s,x_1)\ dx_1$.

In the introduction, we noted that general results on first passage for L\'evy processes, in particular results on the functions $p^*$, are difficult to come by. For this reason, we now focus on the special class of L\'evy processes that can be expressed as a drifting Brownian motion subjected to a time change by an independent L\'evy subordinator. Such {\em L\'evy subordinated Brownian motions} (LSBM) have been studied in a general review by \cite{CherShir02} and more specifically in \cite{Hurd07a}. The  general LSBM is constructed as follows:
\begin{enumerate}
  \item For an initial value $x_0>0$ and drift $\beta$, let $\tilde W_T=x_0 +W_T+\beta T$ be a drifting BM;
  \item For a L\'evy characteristic triple $(b,0,\mu)$ with $b\ge 0$ and $\mbox{supp}(\mu)\subset \mathbb{R}^+$, let the time change process $T_t$ be the associated nondecreasing L\'evy process (a subordinator), taken to be independent of  $W$;
  \item The time changed process $X_t=\tilde W_{T_t}$ is defined to be a LSBM.
\end{enumerate}

So constructed, it is known that $X_t$ is itself a L\'evy process: \cite{CherShir02} provide a characterization of which L\'evy processes are LSBMs.  It was observed in \cite{Hurd07a} that for any LSBM $X_t$, one can define an alternative notion of first passage time, which we denote here by $\tilde t$. 
\begin{definition} For any LSBM $X_t=\tilde W_{T_t}$ we define the {\em first passage time of $\tilde W$}  to be   $T^*=T^*(x_0)=\inf \{T: x_0+W_T+\beta T\le 0 \}$. Note $T^*(x_0)=0$ when $x_0\le 0$. The {\em first passage time of the second kind} of $X_t$ is defined to  $\tilde t=\tilde t(x_0)=\inf\{ t: T_t\ge T^*(x_0) \}$
\end{definition}

This definition of $\tilde t$, and its relation to $t^*$,  is illustrated in Figure \ref{fig1}.

  \begin{figure}\label{fig1}
\begin{center}
\includegraphics[width=0.45\textwidth]{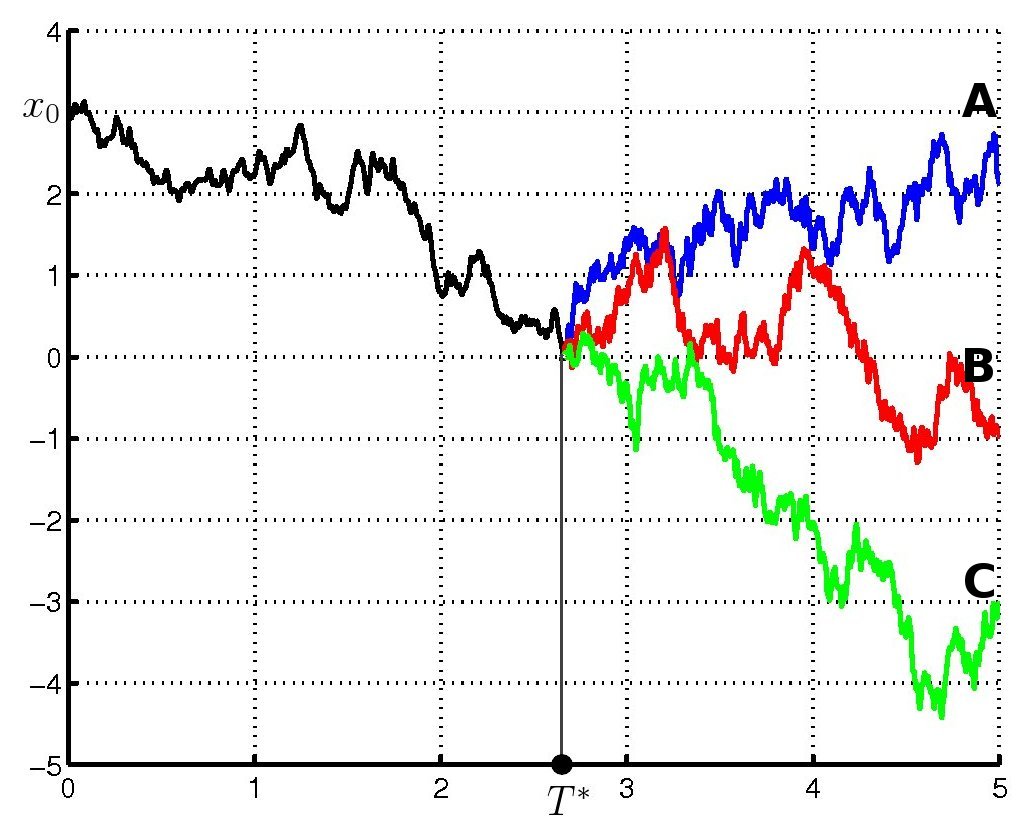}
\includegraphics[width=0.45\textwidth]{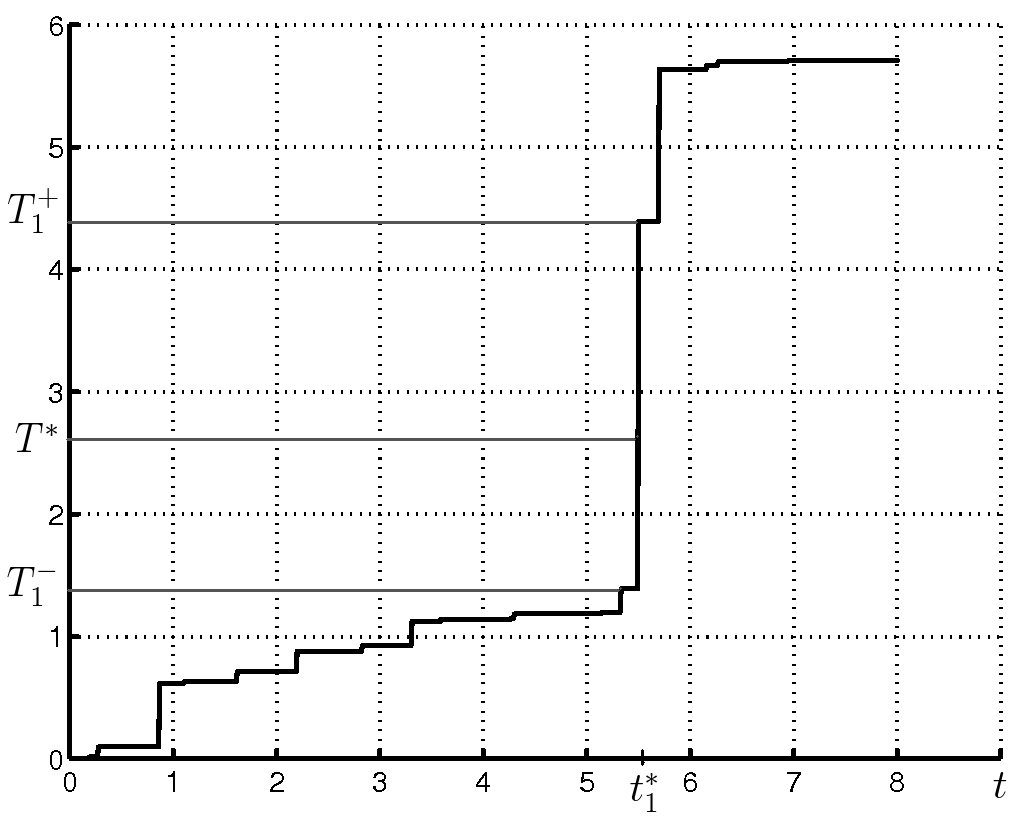}
\vspace{0.5cm} \caption{Three trajectories of $X_t$ and a sample
path of time change $\tau_s$}
\end{center}
\end{figure}

We now show that $\tilde t(x_0):=t_1^*(x_0)$ is the first of a sequence of approximations $\{t_i^*(x_0)\}_{i=1,2,\dots}$ to the stopping time $t^*$. The construction of $t^*_i(x_0,\pi)$ is pathwise: We introduce the second argument   $\pi\in\Omega$ which denotes a {\em sample path}, that is, a pair $(\omega,\tau)$ where $\omega$ is a continuous   drifting Brownian path $\tilde W$ and $\tau$ is a  c\'adl\'ag sample time change path $T$. Thus $\pi:(S,s)\to(\omega(S),\tau(s))_{S,s\ge 0}$. The natural ``big filtration'' $(\CF_t)_{t\ge 0}$ for time-changed Brownian motion has $\CF_t=\sigma\{\omega(S),\tau(s), S\le \tau(t),s\le t\}$. For any $t\ge 0$ there is a natural ``time translation'' operation on paths $\rho_t:(\omega,\tau)\to (\omega',\tau')$ where $\omega'(S)=\omega(S+\tau(t)), \tau'(s)=\tau(s+t)-\tau(t)$.

The construction of $\{t_i^*(x_0,\pi)\}_{i=1,2,\dots}$ for a given sample path $\pi$ is as follows. Inductively, for $i\ge 2$, we define the {\em time of the $i$-th excursion overjump}  
\be \label{tstar}
t^*_i(x_0,\pi)=\inf\{t\ge t^*_{i-1}(x_0,\pi):T_t-T_{t_{i-1}^*}\ge T^*(X_{t^*_{i-1}},\pi')\}
\ee where $\pi'=\rho_{t^*_{i-1}}(\pi)$ denotes a time shifted sample path. Note that $t^*_i(x_0,\pi)=t^*_{i-1}(x_0,\pi)$ if and only if $X_{t^*_{i-1}(x_0,\pi)}\le 0$ or $t^*_{i-1}(x_0,\pi)=\infty$. At any excursion overjump event $t^*_i$, the time interval which covers the event has left and right endpoints $T^-_i=T_{t^*_i-}$ and $T^+_i=T_{t^*_i}$. The joint distribution of $t^*_i(x_0), X_{t^*_i(x_0)}$ can be written 
\be\label{jointPDF3}
p^*_i(x_0;s,x)=E_{x_0}[\delta(t^*_i-s)\delta(X_{t^*_i}-x)]\ee 

The definition of this sequence of stopping times is summarized by the pathwise equation
\be\label{pathwise}
t^*_i(x_0,\pi)=t^*_{i-1}(x_0,\pi){\bf 1}_{\{X_{t^*_{i-1}}\le 0\}} +\left(t^*_1(X_{t^*_{i-1}},\pi')+ t^*_{i-1}(x_0,\pi)\right){\bf 1}_{\{X_{t^*_{i-1}}> 0\}}, \quad i\ge 2
\ee
where $\pi'=\rho_{t^*_{i-1}}(\pi)$.
The identical increments property of the LSBM implies the joint probability densities satisfy the recursive relation
\be \label{p_iteration}
 p_{i}^*(x_0;s,x)= p_1^*(x_0;s,x)I(x\le 0)+\int\limits_0^{\infty} dy \int\limits_{0}^s du \;
    p_{i-1}^*(x_0;u,y)p_{1}^*(y;s-u,x),\quad i\ge 2 
    \ee
    Similarly, the PDF of the first passage time $t^*$ satisfies the relation
    \be \label{pdf_iteration}
 p_{i}^*(x_0;s)= \int\limits_{-\infty}^0 p_1^*(x_0;s,x)dx+\int\limits_0^{\infty} dy \int\limits_{0}^s du \;
    p_{i-1}^*(x_0;u)p_{1}^*(y;s-u,x),\quad i\ge 2. 
    \ee
    \bigskip\noindent
    {\bf Examples of LSBMs:\  } 
We note here three classes of L\'evy processes that can be written as LSBMs and have been used extensively in financial modeling.   
    \begin{enumerate}
 \item The exponential model with parameters $(a,b,c)$ arises by taking $T_t$ to be the increasing process with drift $b\ge 0$  and jump measure $\mu(z)= c e^{-az}, c, a>0$ on $(0,\infty)$. The Laplace exponent of $T$ is
 \[ \psi_T(u):=-\log E[e^{-u T_1}]=bu+uc/(a+u).\]
 The resulting time-changed process $X_t:=\tilde W_{T_t}$ has triple  $(\beta b, b, \rho )$ with
 \[ \rho(y)=\frac{c}{\sqrt{\beta^2+2a}}e^{-(\sqrt{\beta^2+2a}+\beta)(y)^+-(\sqrt{\beta^2+2a}-\beta)(y)^-},\]
where  $(y)^+=\max(0,y), (y)^-=(-y)^+$. This forms a four dimensional subclass of the six-dimensional family of exponential jump diffusions studied by  \cite{KouWang03}. 
  \item The variance gamma (VG) model \cite{MadaSene90}  arises by taking $T_t$ to be a gamma process with drift defined by the characteristic triple $(b, 0, \mu)$ with $b\ge 0$ (usually $b$ is taken to be $0$) and jump measure $\mu(z)= (\nu z)^{-1} \exp(-z/\nu), \nu >0$ on $(0,\infty)$. The Laplace exponent of $T_t, t=1$ is
 \[ \psi_T(u):=-\log E[e^{-u T_1}]=bu+\frac{1}{\nu}\log(1+\nu u).
  \]

  The resulting time-changed process has triple  $(\beta b, b, \rho )$ with
 \[ \rho(y)=\frac{1}{\nu|y|}\exp\left(\beta x-\sqrt{\frac{2}{\nu}+\beta^2}|x|\right)\]
  \item The normal inverse Gaussian model (NIG) with parameters $\tilde\beta, \tilde \gamma$ \cite{Barn-Niel97} arises when $T_t$ is the first passage time for a second independent Brownian motion with drift $\tilde\beta>0$ to exceed the level $\gamma t$. Then 
 \[ \psi_T(u)=\tilde\gamma (\tilde\beta+\sqrt{\tilde\beta^2+2u})
 \]
 and  the resulting time-changed process has Laplace exponent
  \[ \psi_X(u)=x\mu+\tilde\gamma [\tilde \beta+\sqrt{\tilde\beta^2-u^{2}+2\tilde\beta u}].
 \]
\end{enumerate}

 \section{Computing First Passage of the Second Kind}

We have just seen that first passage for LSBMs admits an expansion as a  sum of first passage times of the second kind.  In this section, we show that this expansion can be useful, by proving  several equivalent integral formulas for computing the structure function $p_1^*(x_0;s,x_1)$ for general LSBMs. While the equivalence of these formulas can be demonstrated analytically, their numerical implementations will perform differently: which formula will be superior in practice is not a priori clear, but will likely depend on the range of parameters involved. For a complete picture, we provide independent proofs of the two given formulas. 
 
 \begin{theorem}\label{thm_p_tilde_2D}  
 Let $\psi(u)$ be the Laplace exponent of $T_1$, and let $\tilde W$ have drift $\beta\ne 0$. Then
 \beq\label{eq_ptilde_2D}
  p_1^*(x_0;s,x_1)&&\nonumber\\&&\hspace{-.75in}=
  \frac{e^{\beta(x_1-x_0)}}{4\pi^2  }\iint\limits_{{\mathbb
 R}^2} \frac{\psi(iz_1)-\psi(iz_2)}{i(z_1-z_2)}\frac{e^{-s\psi(iz_1)}}{\sqrt{\beta^2-2iz_2}}
   e^{-x_0\sqrt{\beta^2-2iz_1}-\mid x_1 \mid\sqrt{\beta^2-2i z_2}}
 dz_1 dz_2\\
&&\hspace{-.75in}=-\frac{2e^{\beta (x_1-x_0)}}{\pi^2}\mbox{PV}\iint_{(\mathbb{R}^+)^2} dk_1dk_2 \frac{k_2\cos|x_1|k_1\sin x_0k_2}{k_1^2-k_2^2}e^{-s\psi((k_1^2+\beta^2)/2)}\psi((k_2^2+\beta^2)/2)\nonumber\\
&&\hspace{-.5in}- \frac{e^{\beta (x_1-x_0)}}{\pi}\int_{\mathbb{R}^+}\ \sin|x_1|k\ \sin x_0k\ e^{-s\psi((k^2+\beta^2)/2)}\psi((k^2+\beta^2)/2) dk
\label{realform} \eeq
Here $PV$ denotes that the principal value contour is taken.
 \end{theorem}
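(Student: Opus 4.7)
Proof plan. The strategy is to compute the joint Fourier transform of $(\tilde t,X_{\tilde t})$ by exploiting the independence of $T$ and $\tilde W$, and then invert. Since $\tilde W$ is continuous, $\tilde W_{T^*}=0$ almost surely, and the strong Markov property at $T^*$ shows that $\{\tilde W_{T^*+u}\}_{u\ge 0}$ is a fresh drifting Brownian motion independent of $(T,T^*)$. Writing $\tilde t=\tau_{T^*}$ with $\tau_v:=\inf\{t:T_t\ge v\}$, we obtain the decomposition
\[
p_1^*(x_0;s,x_1) \;=\; \int_0^\infty p_{T^*}(v;x_0)\,dv\int_0^\infty g(r,x_1)\,\rho_v(s,r)\,dr,
\]
where $g(r,y)=(2\pi r)^{-1/2}e^{-(y-\beta r)^2/(2r)}$ is the drifting-BM transition density and $\rho_v(s,r)\,ds\,dr=P(\tau_v\in ds,\,T_{\tau_v}-v\in dr)$.

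The heart of the argument is to evaluate the double Fourier transform of $\rho_v$ in $(v,r)$. The L\'evy system / compensation formula for $T$ with triplet $(b,0,\mu)$ gives
\[
\rho_v(s,r) \;=\; b\,\delta(r)\,p_{T_s}(v) \;+\; \int_r^\infty p_{T_s}(v+r-z)\,\mu(dz),
\]
where the first term is the creeping contribution from the drift. A Fubini change of variables $w=v+r-z$, together with $E[e^{-iz_1 T_s}]=e^{-s\psi(iz_1)}$ and the algebraic identity $\psi(iz_1)-\psi(iz_2)=b\,i(z_1-z_2)+\int(e^{-iz_2 z}-e^{-iz_1 z})\mu(dz)$ (so that the drift $b$ appearing in the jump piece cancels against the creeping piece), yields the clean formula
\[
\iint_0^\infty e^{-iz_1 v - iz_2 r}\rho_v(s,r)\,dv\,dr \;=\; \frac{\psi(iz_1)-\psi(iz_2)}{i(z_1-z_2)}\,e^{-s\psi(iz_1)}.
\]
Fourier inverting and substituting back, the outer $v$- and $r$-integrals separate into two Fourier transforms. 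The first, $\int_0^\infty e^{iz_1 v}p_{T^*}(v;x_0)\,dv=e^{-x_0\beta-x_0\sqrt{\beta^2-2iz_1}}$, is the standard inverse Gaussian characteristic function. The second, $\int_0^\infty e^{iz_2 r}g(r,x_1)\,dr=e^{\beta x_1}(\beta^2-2iz_2)^{-1/2}e^{-|x_1|\sqrt{\beta^2-2iz_2}}$, is obtained by completing the square in $r$ and applying $\int_0^\infty r^{-1/2}e^{-a/r-br}dr=\sqrt{\pi/b}\,e^{-2\sqrt{ab}}$. Collecting the prefactor $e^{\beta(x_1-x_0)}$ reproduces \eqref{eq_ptilde_2D}.

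The real form \eqref{realform} follows by substituting $k_j=\sqrt{2iz_j-\beta^2}$ in each variable and deforming the $z_j$-contours so that the $k_j$ run over $\mathbb{R}^+$; the square-root branch choices convert the complex exponentials into $\sin$ and $\cos$ combinations, and the pole at $z_1=z_2$ becomes the pole at $k_1^2=k_2^2$, producing the principal-value prescription and, via Sokhotski--Plemelj, the additional one-dimensional term along the diagonal. The main technical obstacles I anticipate are: (i) the divided-difference kernel $(\psi(iz_1)-\psi(iz_2))/(i(z_1-z_2))$ is bounded on the diagonal but only conditionally integrable at infinity, so the Fubini interchanges must be justified in a principal-value sense; (ii) the compensation formula must be combined carefully with the creeping contribution coming from the drift $b>0$, which is exactly what allows the divided difference to be written in terms of $\psi$ alone; and (iii) the contour deformation from \eqref{eq_ptilde_2D} to \eqref{realform} requires careful branch-cut and residue bookkeeping to produce both the PV double integral and the single boundary integral.
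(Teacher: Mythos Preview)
Your argument is correct and follows the same architecture as the paper's first proof: condition on the Brownian first-passage time $T^*$, use the joint law of $(\tau_v,T_{\tau_v}-v)$ for the subordinator $T$, then integrate the overshoot against the Brownian transition density and Fourier-invert. The one substantive difference is how the divided-difference kernel is obtained: the paper quotes the Pecherskii--Rogozin identity as a black box, whereas you rederive the same identity from the L\'evy compensation formula combined with the creeping term $b\,\delta(r)\,p_{T_s}(v)$ (which is exactly Kesten's formula $P(\tau_v\in ds,\,T_{\tau_v}=v)=b\,p_{T_s}(v)\,ds$). Your route is more self-contained and makes transparent why the drift $b$ disappears from the final answer, but it produces the identical intermediate expressions. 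For the real form \eqref{realform}, your contour-deformation plan coincides with what the paper outlines in its remark; note that the paper additionally supplies an independent second proof, computing $u^{-1}E[{\bf 1}_{\{s<t_1^*\le s+u\}}\delta(X_{s+u}-x_1)]$ as $u\to 0$ via formulas from \cite{Hurd07a}, which lands directly on the real form without passing through \eqref{eq_ptilde_2D}.
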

 
 \begin{remark} The equivalence of these two formulas can be demonstrated directly by performing a change of variables $k_j=i\sqrt{\beta^2-2iz_j^2}, \ j=1,2$, followed by a deformation of the contours. Justification of the contour deformation (from the branch of a left-right symmetric  hyperbola in the upper half $k_j$-plane to the real axis) depends on the decay of the integrand, and the computation of certain residues. 
\end{remark}
 
 \subsection{First proof of Theorem \ref{thm_p_tilde_2D}}
 For a fixed level $h>0$, the first passage time and the overshoot
 of the process $T_t$ above the level $h$ are defined to be  
 $\tilde t(h)=\inf\{t>0 \mid T_t>h \}$ and $\tilde \delta(h)=T_{\tilde t(h)}-h$.  
 The Pecherskii-Rogozin identity \cite{PercRogo69} applied to the nondecreasing process $T$ says that
 \beqq
   \int\limits_0^{\infty} e^{-z_1 h} E\left[e^{-z_2 \tilde \delta(h)-z_3 \tilde t(h)}
   \right]dh= \frac{\psi(z_1)-\psi(z_2)}{z_1-z_2}\left(z_3 + \psi(z_1)\right)^{-1},
 \eeqq
Inversion of the Laplace transform in the above equation then leads to
 \beq\label{laplace1}
 E\left[e^{-z_2 \tilde \delta(h)-z_3 \tilde t(h)}
   \right]=\frac1{2\pi} \int\limits_{\mathbb{R}} 
   \frac{\psi(iz_1)-\psi(z_2)}{iz_1-z_2}\left(z_3 + \psi(iz_1)\right)^{-1}\; e^{iz_1 h}\; dz_1
 \eeq
 
The first passage time of the BM with drift is defined as
 $T^*=T^*(x_0)=\inf\{ T>0 \mid x_0+W_T+\beta T<0  \}$. Next, we need to find the joint Laplace 
 transform of $t^*_1=\inf\{ t \mid T_t>T^*\}=\tilde t(T^*)$ and the overshoot $\delta^*=\tilde \delta(T^*)$.
 Since $T_t$ is independent of $W_T$ we find that 
  \beq
  E\left[e^{-z_2 \delta^* -z_3 t^*_1}
   \right]&=&E\left [ E\left[e^{-z_2 \tilde \delta(T^*) -z_3 \tilde t(T^*)} \mid T^* 
   \right] \right]\\ \nonumber
   &=&\frac1{2\pi} \int\limits_{\mathbb{R}} 
   \frac{\psi(iz_1)-\psi(z_2)}{iz_1-z_2}\left(z_3 + \psi(iz_1)\right)^{-1}E\left[e^{iz_1 T^*}\right] dz_1\\ \nonumber
   &=&\frac1{2\pi} \int\limits_{\mathbb{R}} 
   \frac{\psi(iz_1)-\psi(z_2)}{iz_1-z_2}\left(z_3 + \psi(iz_1)\right)^{-1}e^{- x_0 \left(
 \beta+\sqrt{\beta^2-2i
 z_1}\right)}dz_1,
  \eeq
where in the last equality we have used the following well-known result for the characteristic function of the first passage time of BM with drift: 
  \beqq
 E\left[ e^{i z_1 T^*(x_0)}\right]=e^{- x_0 \left(
 \beta+\sqrt{\beta^2-2i
 z_1}\right)}.
 \eeqq

 Next we use the Fourier transform of the PDF of the BM with drift
 in time variable to obtain
 \beq
 E[\delta(\tilde W_t-x_1)]=\frac{e^{-\frac{(x_1-\beta t)^2}{2 t}}}{\sqrt{2\pi t}}=
 \frac{e^{\beta x_1}}{2\pi }
 \int\limits_{{\mathbb R}} e^{-i z_2 t} \frac{e^{-\mid
x_1 \mid \sqrt{\beta^2-2iz_2}}}{\sqrt{\beta^2-2iz_2}}dz_2.
 \eeq
 Thus, using the fact that $\tilde W$ is independent of $t^*_1$ and $\delta^*$  we obtain
 \beqq
 &&E\left[e^{-z_3 t^*_1}\delta\left(\tilde W_{\delta^*}-x_1 \right)
   \right]=E\left[E\left[e^{-z_3 t^*_1}\delta\left(\tilde W_{\delta^*}-x_1 \right)
   \mid \delta^*\right] \right]\\
   &=&\frac{e^{\beta x_1}}{2\pi }
 \int\limits_{{\mathbb R}} E\left[e^{-z_3 t^*_1-i z_2 \delta^*}\right] \frac{e^{-\mid
x_1 \mid \sqrt{\beta^2-2iz_2}}}{\sqrt{\beta^2-2iz_2}}dz_2\\
&=&\frac{e^{\beta(x_1-x_0)}}{4\pi^2  }\iint\limits_{{\mathbb
 R}^2} \frac{\psi(iz_1)-\psi(iz_2)}{i(z_1-z_2)}\frac{\left(z_3 + \psi(iz_1)\right)^{-1}}{\sqrt{\beta^2-2iz_2}}
   e^{-x_0\sqrt{\beta^2-2iz_1}-\mid x_1 \mid\sqrt{\beta^2-2i z_2}}
 dz_1 dz_2.
 \eeqq
 Now, the statement of the Theorem follows after one additional Fourier inversion:
 \beqq
 &&p^*_1(x_0;s,x_1)=E\left[\delta\left(t^*_1-s\right) \left(\tilde W_{\delta^*}-x_1\right) \right]\\
 &=&\frac{1}{2\pi} \int\limits_{{\mathbb R}} e^{iz_3 s} E\left[e^{-i z_3 t^*_1}\delta\left(\tilde W_{\delta^*}-x_1 \right)
   \right] dz_3 \\ 
 &=&\frac{e^{\beta(x_1-x_0)}}{4\pi^2  }\iint\limits_{{\mathbb
 R}^2} \frac{\psi(iz_1)-\psi(iz_2)}{i(z_1-z_2)}\frac{e^{-s\psi(iz_1)}}{\sqrt{\beta^2-2iz_2}}
   e^{-x_0\sqrt{\beta^2-2iz_1}-\mid x_1 \mid\sqrt{\beta^2-2i z_2}}
 dz_1 dz_2,
 \eeqq
 where we have also used the following Fourier integral:
 \beqq
  \frac{1}{2\pi}\int\limits_{{\mathbb R}} \frac{e^{iz_3 s}}{iz_3+\psi(i z_1)} dz_3=e^{-s \psi(i z_1)}. 
 \eeqq
 \qed
 
  \subsection{Second proof of Theorem \ref{thm_p_tilde_2D}}

 The strategy of the proof is to compute
 \be I(u)=E_{0,x_0}[{\bf 1}_{\{s< t^*_1\le s+u\}}\delta(X_{s+u}-x_1)]\ee
 and then take the limit of $I(u)/u$ as $u\to 0+$. 
The key idea is to note that $X_{s+u}=X_{s-}+\tilde W'_{T'_u}$ where $\tilde W', T'$ are copies of $\tilde W, T$, independent of the filtration $\CF_{s-}$. We can then perform the above expectation via an intermediate conditioning on $\CF_{s-}$:  
\beq
E[\delta(X_{s+u}-x_1){\bf 1}_{\{s< t^*_1\le s+u\}}|\CF_{s-}]&&\\
&&\hspace{-2in}={\bf 1}_{\{s< t^*_1\}}E[\delta(\ell+\tilde W'_{T_u}-x_1){\bf 1}_{\{u\ge {t'}^*_1\}}|X_{s-}=\ell].
\eeq

To evaluate the expectations that arise,  we will need the second and third of the following results that were stated and proved in \cite{Hurd07a}:
\begin{lemma}\label{phi2lemma}
\begin{enumerate}
  \item For any $s>0$
  \be E_{0,x}[{\bf 1}_{\{s<t^*_1\}}\delta(X_{s}-y)] ={\bf 1}_{\{y>0\}}\frac{e^{\beta(y-x)}}{2\pi }\int_{\mathbb{R}}\left[e^{iz(x-y)}-e^{iz(x+y)}\right]e^{-s\psi((z^2+\beta^2)/2)}dz.
  \ee  
  \item For any $s>0$ and $\epsilon\in\mathbb{R}$
  \be \label{jointPDF}E_{0,x}[{\bf 1}_{\{s\ge t^*_1\}}\delta(X_{s}-y)] =\frac{e^{\beta(y-x)}}{2\pi }\int_{\mathbb{R}+i\epsilon}e^{iz(x+|y|)}e^{-s\psi((z^2+\beta^2)/2)}dz.
  \ee
  \item For any $k$ in the upper half plane, 
  \be E_{0,x}[{\bf 1}_{\{s<t^*_1\}}e^{-\beta X_{s}+ikX_s}] =\frac{e^{-\beta x}}{2\pi }\int_{\mathbb{R}}\left[\frac{i}{k-z}-\frac{i}{k+z}\right]e^{izx}e^{-s\psi((z^2+\beta^2)/2)}dz.
  \label{phi2formula}
  \ee 
\end{enumerate}
\end{lemma}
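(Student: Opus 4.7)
The plan is to derive all three identities from a single building block: the transition sub-density of drifting Brownian motion killed at the origin. By Girsanov's theorem (to absorb the drift $\beta$) and the classical reflection principle, the density of $\tilde W_T$ at $y>0$ on the event $\{T<T^*(x)\}$ is
\[
p^{\text{kill}}_T(x,y) = \frac{e^{\beta(y-x) - \beta^2 T/2}}{\sqrt{2\pi T}}\Bigl[e^{-(y-x)^2/(2T)} - e^{-(y+x)^2/(2T)}\Bigr].
\]
Writing each Gaussian as an inverse Fourier transform of $e^{-T z^2/2}$ converts this to
\[
p^{\text{kill}}_T(x,y) = \frac{e^{\beta(y-x)}}{2\pi}\int_{\mathbb R}\bigl[e^{iz(x-y)} - e^{iz(x+y)}\bigr]\, e^{-T(z^2+\beta^2)/2}\, dz.
\]
This identity does all the work; each part of the lemma is obtained by integrating it against the law of $T_s$ (or against a further weight function) and exchanging the order of integration.

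For part (1), I would condition on the $\sigma$-algebra generated by the subordinator $T$, so that $\{s<t_1^*\}=\{T_s<T^*(x)\}$ and the independence of $\tilde W$ from $T$ makes the conditional density of $X_s$ at $y>0$ on that event equal to $p^{\text{kill}}_{T_s}(x,y)$. Taking expectation over $T_s$ and applying Fubini (justified by Gaussian decay in $T$) replaces $E[e^{-T_s(z^2+\beta^2)/2}]$ by $e^{-s\psi((z^2+\beta^2)/2)}$ and yields (1).

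For part (2), I would first compute the unconditional density $E_{0,x}[\delta(X_s-y)] = \tfrac{e^{\beta(y-x)}}{2\pi}\int_{\mathbb R}e^{iz(y-x)}e^{-s\psi((z^2+\beta^2)/2)}dz$ by the same Fourier-then-subordinate route applied to the unkilled Gaussian kernel. For $y>0$, I subtract (1) and use the $z\to-z$ symmetry of the integrand (which depends on $z$ only through $z^2$) to collapse the three exponentials into a single term $e^{iz(x+y)}$. For $y\le 0$, the event $\{X_s=y\}$ already lies inside $\{s\ge t_1^*\}$, so the answer equals the unconditional density, and rewriting with $|y|=-y$ turns $e^{iz(y-x)}$ (or equivalently $e^{iz(x-y)}$) into $e^{iz(x+|y|)}$. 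The two cases merge into the stated formula, and the shift from $\mathbb R$ to $\mathbb R+i\epsilon$ is a Cauchy deformation whose legality follows from analyticity of the integrand in a horizontal strip together with the exponential decay supplied by $e^{iz(x+|y|)}$ combined with $e^{-s\psi((z^2+\beta^2)/2)}$ for $\epsilon$ in the admissible range.

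For part (3), I would integrate the identity of (1) against $e^{-\beta y+iky}$ over $y\in(0,\infty)$, using that $\{s<t_1^*\}$ forces $X_s>0$. Fubini (valid because $\operatorname{Im}k>0$ gives absolute integrability in $y$) reduces the inner integral to the elementary $\int_0^\infty e^{i(k\mp z)y}dy = i/(k\mp z)$, and the two pieces combine into the bracket $[i/(k-z)-i/(k+z)]$. The only genuinely delicate step throughout the plan is the contour manipulation in part (2): once the decay of $e^{-s\psi((z^2+\beta^2)/2)}$ off the real axis is controlled (so that singularities or branch cuts of $\psi((z^2+\beta^2)/2)$ are not crossed), the remainder is bookkeeping.
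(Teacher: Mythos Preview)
Your argument is correct. The paper does not actually prove this lemma; it simply imports the three identities from \cite{Hurd07a} with the remark ``the following results that were stated and proved in \cite{Hurd07a}.'' So there is no in-paper proof to compare against, and your route---reflection principle plus Cameron--Martin/Girsanov to get the killed drifting Brownian kernel, then subordination via conditioning on $T_s$ and the identity $\{s<t_1^*\}=\{T_s<T^*(x)\}$---is exactly the natural derivation and is essentially what the cited reference does. The only point worth tightening is part~(2): the lemma asserts the contour can be $\mathbb{R}+i\epsilon$ for \emph{any} $\epsilon\in\mathbb{R}$, whereas you (correctly) hedge with ``$\epsilon$ in the admissible range''; making the full claim requires checking that $z\mapsto\psi((z^2+\beta^2)/2)$ is entire with $\operatorname{Re}\psi((z^2+\beta^2)/2)\to+\infty$ along horizontal lines, which holds for the subordinator Laplace exponents considered here but is not automatic for an arbitrary $\psi$.
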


First, using \eqref{jointPDF}, we find 
\beq
E[\delta(X_{s+u}-x_1){\bf 1}_{\{s< t^*_1\le s+u\}}|X_{s-}=\ell]&&\\
&&\hspace{-2in}={\bf 1}_{\{s< t^*_1\}}\frac{e^{\beta(x_1-\ell)}}{2\pi}\int_{\mathbb{R}+i\epsilon} dk e^{ik(\ell+|x_1|)}e^{-u\psi((k^2+\beta^2)/2)}.
\eeq
When we paste this expression into the final expectation over  $X_{s-}$ we can use Fubini to interchange
the expection and integral providing we choose $\epsilon>0$. Then we find
\be  
I=\frac{e^{\beta x_1}}{2\pi}\int_{\mathbb{R}+i\epsilon} dk e^{ik|x_1|}e^{-u\psi((k^2+\beta^2)/2)} E_{0,x_0}[e^{ikX_s-\beta X_s}{\bf 1}_{\{s< t^*_1\}}].
\ee
We can now use \eqref{phi2formula} from Lemma \ref{phi2lemma} obtain
\be
I=\frac{e^{\beta (x_1-x_0)}}{(2\pi)^2}\iint_{(\mathbb{R}+i\epsilon)\times\mathbb{R}}  e^{ik|x_1|+izx_0}\left[\frac{i}{k-z}-\frac{i}{k+z}\right]e^{-u\psi((k^2+\beta^2)/2)-s\psi((z^2+\beta^2)/2)}dz dk.
\ee
Noting that $I(0)=0$ and taking $\lim_{u\to 0}I(u)/u$ now gives 
\beq
p^*_1(x_0;s,x_1)&=& \\ \nonumber
&&\hspace{-1.2in}\frac{e^{\beta (x_1-x_0)}}{2\pi^2}\iint_{(\mathbb{R}+i\epsilon)\times\mathbb{R}} dkdz \frac{iz}{k^2-z^2}e^{ik|x_1|+izx_0}e^{-s\psi((z^2+\beta^2)/2)}\psi((k^2+\beta^2)/2) \label{complexform}. \eeq
Here the arbitrary parameter $\epsilon>0$ can be seen to ensure the correct prescription for dealing with the pole at $k^2=z^2$.

Finally, the complex integration 
 in \eqref{complexform} can be expressed in the following manifestly real form:
 \beq
 p^*_1(x_0;s,x_1)&=&-\frac{2e^{\beta (x_1-x_0)}}{\pi^2}\mbox{PV}\iint_{(\mathbb{R}^+)^2} dkdz \frac{z\cos|x_1|k\sin x_0z}{k^2-z^2}e^{-s\psi((k^2+\beta^2)/2)}\psi((z^2+\beta^2)/2)\nonumber\\
&&- \frac{e^{\beta (x_1-x_0)}}{\pi}\int_{\mathbb{R}^+}\ \sin|x_1|z\ \sin x_0z\ e^{-s\psi((z^2+\beta^2)/2)}\psi((z^2+\beta^2)/2) dz.
\label{realform2} \eeq
  involving a principle value integral  plus  explicit  half residue terms for the poles $k=\pm z$.
  \qed

  \section{The iteration scheme and its convergence} 
The next theorem shows that \eqref{p_iteration}  can be used to compute $p^*(x_0;s,x)$. We define a suitable $L^\infty$ norm for functions $f(x_0;u,x)$:
\be   
\|f\|_\infty=\sup\limits_{x_0 \ge 0} \left[\int\limits_0^{\infty} \int\limits_{0}^{\infty} \;
    |f(x_0;u,x)|du dx\right].
    \ee

     \begin{theorem}\label{thm_exp_convgce}
     The sequence $(p_n^*)_{n\ge 1}$  converges exponentially in the $L^{\infty}$ norm.
     \end{theorem}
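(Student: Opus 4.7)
My plan rests on recognising that the $L^\infty$ norm defined in the theorem integrates only over $\{u\ge 0,\,x\ge 0\}$, while the true first passage density $p^*(x_0;s,x)$ is supported on $\{x\le 0\}$. Hence $\|p^*\|_\infty=0$, and setting
\begin{equation*}
\alpha_n\;:=\;\|p_n^*\|_\infty\;=\;\sup_{x_0\ge 0}P_{x_0}\bigl(t_n^*<\infty,\;X_{t_n^*}>0\bigr),
\end{equation*}
the theorem reduces to showing that $\alpha_n$ decays geometrically in $n$.

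I would next extract a one-step contraction directly from the pathwise equation \eqref{pathwise}. On the event $\{X_{t_{n-1}^*}\le 0\}$ the iteration has already stabilised ($t_n^*=t_{n-1}^*$ and $X_{t_n^*}=X_{t_{n-1}^*}\le 0$) and contributes nothing to $\alpha_n$; on $\{X_{t_{n-1}^*}=y>0\}$ the increment $t_n^*-t_{n-1}^*$ is, by the time-shift construction in \eqref{tstar}, an independent copy of $t_1^*(y)$. Conditioning on $\CF_{t_{n-1}^*}$ and using the strong Markov property gives
\begin{equation*}
\alpha_n\;\le\;\sup_{x_0\ge 0}E_{x_0}\bigl[P_{X_{t_{n-1}^*}}(t_1^*<\infty,\,X_{t_1^*}>0)\,I(X_{t_{n-1}^*}>0)\bigr]\;\le\;\alpha\cdot\alpha_{n-1},
\end{equation*}
where $\alpha:=\sup_{y>0}P_y(t_1^*<\infty,\,X_{t_1^*}>0)$. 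Iteration yields $\alpha_n\le\alpha^n$, which is the desired exponential decay provided that $\alpha<1$.

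The hard part is proving $\alpha<1$. By continuity of $\tilde W$, $\tilde W_{T^*(y)}=0$ almost surely on $\{T^*<\infty\}$. The strong Markov property of $\tilde W$ at $T^*$, combined with the independence of $\tilde W$ and $T$, identifies
\begin{equation*}
X_{t_1^*(y)}\;\stackrel{d}{=}\;\beta\,\delta^*+\sqrt{\delta^*}\,Z,\qquad \delta^*:=T_{t_1^*}-T^*(y),\;Z\sim\mathcal{N}(0,1),\;Z\perp\delta^*,
\end{equation*}
so that $P_y(t_1^*<\infty,\,X_{t_1^*}>0)=E_y[\Phi(\beta\sqrt{\delta^*})\,I(\delta^*>0)]$. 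For $\beta\le 0$ the integrand is bounded by $\tfrac12$, giving $\alpha\le\tfrac12$ directly. The delicate case is $\beta>0$: the overshoot $\delta^*$ of the subordinator above the (random) level $T^*(y)$ must be kept tight uniformly in $y$ so that $\Phi(\beta\sqrt{\delta^*})$ does not approach $1$. I would obtain this tightness from the Pecherskii--Rogozin identity already exploited in the first proof of Theorem~\ref{thm_p_tilde_2D}, which gives the Laplace transform of $\delta^*(h)$ explicitly and, under a mild integrability assumption on the L\'evy measure $\mu$ of $T$, a uniform-in-$h$ bound yielding $\alpha<1$.

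Combining the contraction $\alpha_n\le\alpha\,\alpha_{n-1}$ with $\alpha<1$ gives $\|p_n^*-p^*\|_\infty=\alpha_n\le\alpha^n$, the claimed exponential $L^\infty$-convergence.
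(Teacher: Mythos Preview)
Your reduction is essentially the paper's: both arguments hinge on the same contraction constant
\[
\alpha \;=\; C \;=\; \sup_{x_0\ge 0} P_{x_0}\bigl(t_1^*<\infty,\; X_{t_1^*}>0\bigr),
\]
and your treatment of the case $\beta\le 0$ via $\Phi(\beta\sqrt{\delta^*})\le\tfrac12$ is exactly the paper's. Your packaging is in fact slightly cleaner than the paper's: you observe that $\|p^*\|_\infty=0$ and bound $\alpha_n$ directly, whereas the paper establishes $\|p_{n+1}^*-p_n^*\|_\infty\le C\,\|p_n^*-p_{n-1}^*\|_\infty$ from \eqref{p_iteration} and invokes the Cauchy criterion.

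The gap is in the case $\beta>0$. Uniform-in-$y$ tightness of the overshoot $\delta^*(y)$ is neither what the paper uses nor obviously available without the extra integrability hypothesis you introduce on $\mu$: the level $T^*(y)$ is random, and overshoot tightness over large deterministic levels already requires $E[T_1]<\infty$, which the paper does not assume. The paper instead splits on the size of $x_0$. For $x_0\ge c>0$ it simply uses that $\tilde W$ with positive drift escapes with positive probability, so
\[
P_{x_0}\bigl(t_1^*<\infty\bigr)\;=\;P\bigl(T^*(x_0)<\infty\bigr)\;\le\;P\bigl(T^*(c)<\infty\bigr)\;=\;1-\epsilon_1(c)<1,
\]
which bounds the whole expression with no reference to $\delta^*$. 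For $x_0\to 0^+$ one has $T^*(x_0)\to 0^+$ a.s., so the overshoot $\delta^*(x_0)$ converges in law either to the normalised jump measure of $T$ (compound Poisson case) or to $\delta_0$ (infinite activity); picking any $\tau>0$ with $P(\delta^*<\tau)\to\xi>0$ and using monotonicity of $t\mapsto P(W_t+\beta t<0)$ then yields
\[
P_{x_0}\bigl(X_{t_1^*}>0\bigr)\;\le\;1-P(W_\tau+\beta\tau<0)\,\xi\;<\;1.
\]
Combining the two regimes gives $\alpha<1$ with no integrability assumption on $\mu$. Your Pecherskii--Rogozin route could perhaps be completed, but as written it is a sketch that both adds a hypothesis and does not address how to pass from deterministic to random levels.
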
 
     \begin{proof}
     First we find from (\ref{p_iteration})
     \beqq
     p_{n+1}^*(x_0;s,x_1)-p_{n}^*(x_0;s,x_1)=\int\limits_0^{\infty} dy \int\limits_{0}^s du \;
    p_1^*(x_0;s-u,y)\left[p_n^*(y;u,x_1) -p_{n-1}^*(y;u,x_1) \right],
     \eeqq
     thus 
     \beqq
     \| p_{n+1}^*-p_{n}^* \|_{\infty}\le C \| p_{n}^*-p_{n-1}^* \| _{\infty}
     \eeqq
     where 
     \beqq
      C=\sup\limits_{x_0 \ge 0} \left[\int\limits_0^{\infty} \int\limits_{0}^{\infty} \;
    p_1^*(x_0;u,x)du dx\right].
     \eeqq
     
     The proof is based on the probabilistic interpretation of the constant $C$: by definition $p_1^*(x_0;u,x)$
     is the joint density of $t^*_1$ and $X_{t^*_1}$, thus we obtain: 
     \beqq
     C=\sup\limits_{x_0 \ge 0} P\left(t_1^*<+\infty,\;  X_{t_1^*}>0  \mid X_0=x_0\right).
     \eeqq
     Next, using the fact that $\tilde W_{T^*}=0$ ($T^*$ is the first passage time of $X_T$ and $X$ is a continuous process)
     and the strong Markov property of the Brownian motion we find that 
     \beqq
     C
     &=&
     P\left(t_1^*<+\infty,\;  \tilde W_{T_{t_1^*}}-\tilde W_{T^*}>0  \mid \tilde W_0=x_0\right)\\
     &=& P\left(t_1^*<+\infty,\;  W_{\delta^*}+\beta\delta^*>0  \mid W_0=0\right),
     \eeqq
     where the Brownian motion $W_t$ is independent of $T_t$ and $\delta^*=\delta^*(x_0)=T_{t_1^*}-T^*$ 
     is the overshoot of the time change above $T^*$. 
     
     Thus we need to prove that 
     \beq
     C=\sup\limits_{x_0 \ge 0} P\left(t_1^*<+\infty,\;  W_{\delta^*}+\beta\delta^*>0  \mid W_0=0\right) <1,
     \eeq
     where $t_1^*=t_1^*(x_0)$, $\delta^*=\delta^*(x_0)$ and the Brownian motion $W$ is independent of $t_1^*$ and $\delta^*$.

     First we will consider the case when $\beta<0$. In this case we  obtain
     \beqq
     P\left(t_1^*<+\infty,\;  W_{\Delta^*}+\beta\delta^*>0  \mid W_0=0\right)&\le&
     P\left( W_{\delta^*}+\beta\delta^*>0  \mid W_0=0\right)\\
     &&\hspace{-6cm}=\ \int\limits_{0}^{\infty} P( W_t+\beta t>0 \mid W_0=0 )P(\delta^* \in dt)<
     \int\limits_{0}^{\infty} \frac12 P(\delta^* \in dt)=\frac12,
     \eeqq
     where we have used the fact that $W_t$ is independent of the overshoot  $\delta^*$ and that 
     $P( W_t+\beta t>0 \mid W_0=0 )<\frac12$ for any $t$ and any $\beta<0$. Thus in the case when the drift $\beta$ is negative we 
     obtain an estimate $C<\frac12$.

     The case when the drift $\beta$ is positive is more complicated. 
     We can not use the same techniques as before, since the bound
     $P( W_t+\beta t>0 \mid W_0=0 )<\frac12$ is no longer true: in fact    
     $P( W_t+\beta t>0 \mid W_0=0 )$ monotonically increases to $1$ as $t\to \infty$.

     First we will consider the case when $x_0$ is bounded away from $0$: $x_0\ge c>0$. 
     Then $x_0+W_t+\beta t$ has a positive probability of escaping to $+\infty$ and never crossing the barrier at 0, 
     thus 
     \beqq
      P\left(t_1^*<+\infty,\;  W_{\delta^*}+\beta\delta^*>0  \mid W_0=0\right)&\le& P\left(t_1^*(x_0)<+\infty\right)\\
     &<&P\left(t_1^*(c)<+\infty\right)=1-\epsilon_1(c).
     \eeqq

     Now we need to consider the case when $x_0 \to 0^+$. 
     The proof in this case is based on the following sequence of inequalities:
     \beq\label{big_inequalities}
     P\left(t_1^*<+\infty,\;  W_{\delta^*}+\beta\delta^*>0  \mid W_0=0\right)\hspace{-6cm}&& \\ \nonumber
     &\le& P\left( W_{\delta^*}+\beta\delta^*>0  \mid W_0=0\right)\\\nonumber
     &=&
     1-P\left( W_{\delta^*}+\beta\delta^*<0  \mid W_0=0\right)\\\nonumber
     &=&1-\int\limits_{0}^{\infty} P( W_t+\beta t<0 \mid W_0=0 )P(\delta^* \in dt)\\\nonumber
     &<& 1-\int\limits_{0}^{\tau} P( W_t+\beta t<0 \mid W_0=0 )P(\delta^* \in dt)\\\nonumber
     &<& 1-\int\limits_{0}^{\tau} P( W_{\tau}+\beta {\tau}<0 \mid W_0=0 )P(\delta^* \in dt)\\\nonumber
     &=&1- P( W_{\tau}+\beta {\tau}<0 \mid W_0=0 )P(\delta^* < \tau),
     \eeq
     where $\tau$ is any positive number and the last inequality is true since $P( W_t+\beta t<0 \mid W_0=0 )$
     is a decreasing function of $t$. 
     
     Since $x_0 \to 0^+$, we also have $T^*(x_0) \to 0^+$ with probability 1. Since $\delta^*$ is the overshoot of $T^*$,  
     and $T^* \to 0^+$ as $x_0 \to 0^+$, we see that the 
      distribution of the overshoot $\delta^*(x_0)$
     converges either to the distribution of the jumps of $T_t$ if the time change process $T_t$ is a compound Poisson
      process or to the Dirac delta distribution at $0$ if $T_t$ has infinite activity of jumps. Therefore in the case 
      when $T_t$ is a compound Poisson process
      with the jump measure $\nu(dx)$ we choose $\tau$ such that $\nu([0,\tau])>0$, and if $T_t$ has infinite activity of jumps 
      we can take any $\tau>0$. Then we obtain $\lim\limits_{x_0\to 0^+} P(\delta^*(x_0)<\tau)=\xi$, where
      $\xi=\nu([0,\tau])$ in the case 
      of compound Poisson process, and $\xi=1$ in the case of the process with infinite activity of 
      jumps. 
      Using (\ref{big_inequalities}) we find that as $x_0\to 0^+$
      \beqq
      P\left(t_1^*(x_0)<+\infty,\;  W_{\delta^*}+\beta\delta^*>0  \mid W_0=0\right)
      &<&1-P( W_{\tau}+\beta {\tau}<0 \mid W_0=0 )\xi\\
      &<&1-\epsilon_2.
      \eeqq
      
      To summarize, we have proved that the function 
      \beqq
      P(x_0)=P\left(t_1^*(x_0)<+\infty,\;  W_{\delta^*}+\beta\delta^*>0  \mid W_0=0\right),
      \eeqq
      satisfies the following properties: 
      \begin{itemize}
      \item for any $c>0$ there exists $\epsilon_1=\epsilon_1(c)>0$ such that $P(x_0)<1-\epsilon_1(c)$ for all $x_0>c$ 
      \item  there exists $\epsilon_2>0$ such that $\lim\limits_{x_0\to 0^+} P(x_0)< 1-\epsilon_2$ .
      \end{itemize}
       Therefore we conclude that there exists 
      $\epsilon>0$, such that  $P(x_0)<1-\epsilon$ for 
      all $x_0\ge 0$, thus $C<1-\epsilon$. This ends the proof in the second case $\beta >0$. 
     \end{proof}

For a complementary point of view, the next result shows that the sequence $(t^*_i(x_0)_{i\ge 1}$  converges pathwise. 
\begin{theorem} For any TCBM with L\'evy subordinator $T_t$ and Brownian motion with drift $\beta$ the sequence of stopping times $(t^*_i(x_0)_{i\ge 1}$  converges a.s to $t^*$.
  \end{theorem}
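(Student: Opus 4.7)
The plan is to combine a pathwise monotone upper bound with the $L^\infty$ convergence established in Theorem \ref{thm_exp_convgce}.

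\emph{Pathwise monotone upper bound.} The recursion \eqref{pathwise} makes $i \mapsto t^*_i(x_0,\pi)$ pointwise non-decreasing, and a short induction shows $t^*_i \le t^*$. For $i=1$, right-continuity of $X$ gives $X_{t^*}\le 0$ on $\{t^*<\infty\}$, whence $\tilde W_{T_{t^*}}\le 0$ and so $T_{t^*}\ge T^*(x_0)$, giving $t^*_1\le t^*$; the inductive step applies this to the time-shifted path $\rho_{t^*_{i-1}}(\pi)$ with starting value $X_{t^*_{i-1}}$ on $\{X_{t^*_{i-1}}>0\}$, and is trivial on the complement (where the recursion becomes constant). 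Hence the pathwise limit $t^*_\infty := \lim_i t^*_i$ exists with $t^*_\infty \le t^*$ a.s.

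\emph{Identification of the limit distribution.} Theorem \ref{thm_exp_convgce} shows that the iteration \eqref{p_iteration} is a strict contraction in $L^\infty$, so $(p^*_n)$ converges to the unique fixed point of the associated operator $\Phi$. Next I would verify that the joint density $p^*$ of $(t^*, X_{t^*})$ is itself a fixed point of $\Phi$, via the strong Markov decomposition at the stopping time $t^*_1$: on $\{X_{t^*_1}\le 0\}$ one has $t^*=t^*_1$, contributing $p^*_1(x_0;s,x)I(x\le 0)$; on $\{X_{t^*_1}=y>0\}$ the shifted path $\rho_{t^*_1}(\pi)$ is independent of $\CF_{t^*_1}$ and is a fresh LSBM starting from $y$, contributing $\int dy\int du\, p^*_1(x_0;u,y)p^*(y;s-u,x)$. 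Uniqueness of the fixed point then forces $p^*_n\to p^*$ in $L^\infty$, and in particular $t^*_n \Rightarrow t^*$.

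\emph{Upgrading to almost sure equality.} Since $t^*_n \uparrow t^*_\infty$ a.s.\ and $t^*_n \Rightarrow t^*$, the limit $t^*_\infty$ has the same distribution as $t^*$ on $[0,\infty]$. Combined with the pointwise bound $t^*_\infty\le t^*$ a.s., the elementary lemma that two a.s.-ordered random variables with a common distribution coincide almost surely---verified by taking $f$ bounded and strictly increasing on $[0,\infty]$ and using $E[f(t^*)-f(t^*_\infty)]=0$ together with $f(t^*)\ge f(t^*_\infty)$ a.s.---yields $t^*_\infty = t^*$ a.s.

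The main obstacle is the strong Markov step identifying $p^*$ as a fixed point of $\Phi$. Because $t^*_1$ is a stopping time of the enlarged filtration $(\CF_t)$ observing both $\tilde W$ and $T$, one must check that $\rho_{t^*_1}(\pi)$ produces an independent LSBM starting from $X_{t^*_1}$; this follows from the independence of $\tilde W$ and $T$ in the construction together with the strong Markov property applied separately to the Brownian motion and to the L\'evy subordinator, but warrants careful statement. A purely pathwise alternative partitions sample space by $N := \inf\{i : X_{t^*_i}\le 0\}$: on $\{N<\infty\}$ the recursion becomes constant with $t^*_N=t^*$, while on $\{N=\infty\}$ the BM first-passage thresholds $T^{(i)}$ associated with the iteration accumulate at some $T^\infty\in(0,\infty]$, and continuity of $\tilde W$ at the limit of its zero crossings together with quasi-left continuity of the subordinator $T$ at the predictable time $t^*_\infty$ force $X_{t^*_\infty}=\tilde W_{T^\infty}=0$, and hence $t^*\le t^*_\infty$.
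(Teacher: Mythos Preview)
Your proposal is correct and in fact contains two separate arguments. The paper chooses only the second of these---the ``purely pathwise alternative'' you sketch in your final paragraph. Its proof (attributed to M.~Barlow) runs exactly as you outline: on the event where the recursion never stabilizes, the overjump intervals $[T_{t^*_i-},T_{t^*_i}]$ shrink to a point $T_\infty$, continuity of $\tilde W$ at the accumulated zero gives $\tilde W_{T_\infty}=0$, and since $t^*_\infty$ is announced by the $t^*_i$ while the subordinator's jump times are totally inaccessible, $T_{t^*_\infty}=T_\infty$, forcing $X_{t^*_\infty}=0$ and hence $t^*_\infty\ge t^*$.

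Your primary route---combining the contraction of Theorem~\ref{thm_exp_convgce} with the identification of $p^*$ as a fixed point of \eqref{p_iteration} via strong Markov at $t^*_1$, and then upgrading $t^*_n\Rightarrow t^*$ to $t^*_\infty=t^*$ a.s.\ through the ordering lemma---is a genuinely different argument. It has the appeal of tying the two convergence theorems together, showing that pathwise convergence can be \emph{deduced} from the $L^\infty$ result rather than proved independently. The cost is exactly what you flag: one must verify that the post-$t^*_1$ path is a fresh LSBM independent of $\CF_{t^*_1}$, which is routine but needs the strong Markov property of the pair $(\tilde W,T)$ in the enlarged filtration. One small technical point worth noting is that the $L^\infty$ norm in Theorem~\ref{thm_exp_convgce} integrates only over $\{x>0\}$, so you should observe that convergence there feeds back through \eqref{p_iteration} to give convergence of the full joint law. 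The paper's pathwise proof is shorter and self-contained, avoiding both the fixed-point identification and any appeal to Theorem~\ref{thm_exp_convgce}.
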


\noindent{\bf Proof:\ }\footnote{The authors would like to thank Professor Martin Barlow for providing ths proof.} If $t^*=\infty$, then certainly $t^*_i\to\infty$, so we suppose $t^*<\infty$. In this case, if $t^*_i=t^*_{i+1}$ for some $i$ the sequence converges, and thus the only interesting case to analyze is if $t^*\ne t^*_i$ for all $ i<\infty$. Then we have $t^*_1<t^*_2<\dots<t^*_i<\dots$. Correspondingly, we have an  infinite sequence of excursion overjump intervals which do not overlap: let their endpoints be $T^*_{i-}:=T_{t^*_i-}<T^*_{i+}:=T_{t^*_i}$. The following observations lead to the conclusion:  
\begin{enumerate}
\item by monotonicity and boundness of the sequences $(T^*_{i-})$ and $(T^*_{i+})$, $\lim_{i\to\infty}T^*_{i-}=\lim_{n\to\infty}T^*_{i+}=T_\infty$ exists;
\item $x_0+W_{T_\infty}+\beta T_\infty=0$ by the continuity of Brownian motion;
\item $\lim_{i\to\infty}t^*_i=t_\infty$ exists, and $t_\infty\le t^*$;
\item Jump times are totally inaccessible, so there is no time jump at time $t_\infty$ almost surely, hence $T_{t_\infty}=T_\infty$;
\item Therefore $X_{t_\infty}=0$ and so $t_\infty\ge t^*$: hence $t_\infty= t^*$.
\end{enumerate} 
\qed

 \section{The Variance Gamma model}

The Variance Gamma (VG) process described in Section 2 is the LSBM where the time change process $T_t$ is the L\'evy process with jump measure $\mu(z)=(\nu z)^{-1}\exp(-z/\nu)$ on $(0,\infty)$ and the Laplace exponent $\psi_T(u)=\frac{1}{\nu}\log(1+\nu u )$. In this section we take the parameter $b=0$. This model has been widely used for option pricing where it has been found to provide a better fit to market data than the Black-Scholes model, while preserving a degree of analytical tractability. The main result in this section reduces the 2D integral representation (\ref{eq_ptilde_2D}) for $p_1^*(x_0;s,x_1)$ to a 1D integral and leads to greatly simplified numerical computations. 
  \begin{theorem}
   Define 
 $\alpha=\sqrt{\frac{2}{\nu}+\beta^2}$. Then 
 \beq\label{eq_ptilde1}
p_1^*(x_0;s,x_1)&=&\frac{e^{\beta(x_1-x_0)}}{2\pi \nu
 }\int\limits_{{\mathbb
 R}} \frac{(1+i\nu
 z)^{-\frac{s}{\nu}}}{\sqrt{\beta^2-2iz}}
   e^{-x_0\sqrt{\beta^2-2iz}}
   \\ \nonumber && \times
   \bigg[e^{\mid x_1 \mid\sqrt{\beta^2-2iz}}Ei\left(-\mid x_1 \mid\left(\alpha+ \sqrt{\beta^2-2iz}\right) \right)- \\ \nonumber
   && \;\;\;\;\;\; e^{-\mid x_1 \mid\sqrt{\beta^2-2iz}}Ei\left(-\mid x_1 \mid\left(\alpha- \sqrt{\beta^2-2iz}\right) \right)\bigg]
 dz,
 \eeq
where $Ei(x)$ is the exponential integral function (see \cite{GradRyzh00}).
  \end{theorem}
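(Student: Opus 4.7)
The plan is to reduce the double integral (\ref{eq_ptilde_2D}), specialized to $\psi(u)=\nu^{-1}\log(1+\nu u)$, to a single integral by performing the inner ($z_2$) integration in closed form. Writing $z$ for the surviving variable and $\xi=\sqrt{\beta^2-2iz}$, the factors $(1+i\nu z)^{-s/\nu}$ and $e^{-x_0\xi}$ depend only on $z$ and pull out. What remains is to show that the inner integral $\mathcal I(z)$ produces the bracketed $Ei$-combination in (\ref{eq_ptilde1}).

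The crucial first step is the substitution $u=\sqrt{\beta^2-2iz_2}$ (principal branch, $\operatorname{Re}(u)>0$), which maps $z_2\in\mathbb R$ onto the right branch $\Gamma$ of the hyperbola $\operatorname{Re}(u)^2-\operatorname{Im}(u)^2=\beta^2$. Under it, $dz_2=iu\,du$ (so the $\sqrt{\beta^2-2iz_2}$ in the denominator cancels the $u$ in $dz_2$), $1+i\nu z_2=\nu(\alpha^2-u^2)/2$, and $i(z-z_2)=(u^2-\xi^2)/2$. These identifications convert $\mathcal I(z)$ into
\begin{equation*}
\mathcal I(z)=\frac{2i}{\nu}\int_{\Gamma}\frac{\log(\alpha^2-\xi^2)-\log(\alpha^2-u^2)}{u^2-\xi^2}\,e^{-|x_1|u}\,du.
\end{equation*}

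The second step is to deform $\Gamma$ rightward onto the branch cut of $\log(\alpha^2-u^2)$ along $[\alpha,\infty)$; the other branch point $u=-\alpha$ lies in the left half plane and is irrelevant. The apparent poles at $u=\pm\xi$ are removable, since $\log(\alpha^2-u^2)=\log(\alpha-u)+\log(\alpha+u)$ shows the numerator has a simple zero at each of those points. The arcs joining $\Gamma$ to the cut at infinity vanish because $\operatorname{Re}(u)\ge\beta>0$ throughout the intermediate region and $e^{-|x_1|u}$ dominates the logarithmic growth of the numerator. What survives is the principal-branch jump of $\log(\alpha^2-u^2)$ across the cut, equal to $-2\pi i$, collapsing the integral to
\begin{equation*}
\mathcal I(z)=\frac{4\pi}{\nu}\int_\alpha^\infty\frac{e^{-|x_1|u}}{u^2-\xi^2}\,du.
\end{equation*}

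The final step is elementary: partial fractions $\frac{1}{u^2-\xi^2}=\frac{1}{2\xi}\bigl(\frac{1}{u-\xi}-\frac{1}{u+\xi}\bigr)$, the shifts $v=u\mp\xi$, and the identity $\int_x^\infty v^{-1}e^{-v}\,dv=-Ei(-x)$ yield
\begin{equation*}
\int_\alpha^\infty\frac{e^{-|x_1|u}}{u\mp\xi}\,du=-e^{\pm|x_1|\xi}\,Ei\bigl(-|x_1|(\alpha\mp\xi)\bigr),
\end{equation*}
and reassembling with the prefactor $e^{\beta(x_1-x_0)}/(4\pi^2)$ reproduces (\ref{eq_ptilde1}). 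The main obstacle is the contour deformation: fixing an unambiguous branch convention so that the sign of the jump comes out correctly, and verifying the arc estimates uniformly in $\xi$, since as $z$ ranges over $\mathbb R$ the pole $u=\xi$ can have arbitrarily large real part and approach, but (for $z\neq 0$) never land on, the cut.
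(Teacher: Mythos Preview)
Your approach is correct and follows a genuinely different route from the paper's. Both proofs begin by isolating the $z_2$-integral and substituting to expose the square root, but the paper uses $u=i\sqrt{\beta^2-2iz_2}$, deforms the resulting contour to the real line, then separates the logarithm as $\log(u+i\alpha)+\log(u-i\alpha)-\log(2/\nu+2iz_1)$ and applies partial fractions in $u$ to produce six Fourier integrals of the form $\int\log(1+iy/b)\,e^{ixy}\,dy/y$, each evaluated via tabulated identities (Gradshteyn--Ryzhik) that yield $Ei$. Your substitution $u=\sqrt{\beta^2-2iz_2}$ instead lands on the right hyperbola branch, and you deform \emph{onto the branch cut} $[\alpha,\infty)$ of $\log(\alpha^2-u^2)$, picking up its $2\pi i$ jump and collapsing the problem immediately to the elementary integral $\int_\alpha^\infty e^{-|x_1|u}(u^2-\xi^2)^{-1}\,du$, from which the $Ei$ terms follow directly by partial fractions and the definition of $Ei$. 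Your route is more self-contained---no table lookup---and makes the origin of the exponential integrals transparent; the paper's route stays on the real axis throughout and leans on standard Fourier identities, which some readers may find more mechanical. One minor correction: your decay estimate should read $\operatorname{Re}(u)\ge|\beta|>0$ rather than $\operatorname{Re}(u)\ge\beta>0$, since the theorem only assumes $\beta\ne0$.
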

  \begin{proof}
  Consider the function $I(z_1)$ which represents the outer integral in (\ref{eq_ptilde_2D}) 
  \beqq
  I(z_1)=\frac{1}{2\pi}\int\limits_{{\mathbb R}} 
  \frac{\log(1+i\nu z_1)-\log(1+i\nu z_2)}{i(z_1-z_2)}\frac{ e^{-\mid x_1 \mid\sqrt{\beta^2-2i z_2}}}{\sqrt{\beta^2-2iz_2}} dz_2
  \eeqq
  First we perform the change of variables $u=i\sqrt{\beta^2-2iz_2}$ and obtain 
  \beq\label{eq_I}
   I(z_1)
   =\frac{1}{\pi} \int\limits_{\mathbb R} \frac{\log\left(1+\frac{\nu}{2}(u^2+\beta^2) \right)-\log(1+i\nu z_1)}{u^2+\beta^2-2iz_1}
    e^{i \mid x_1 \mid u}du,
  \eeq
  where the contour $L$ obtained from ${\mathbb R}$ under map $z_2\to u=i\sqrt{\beta^2-2iz_2}$ is transformed into 
  contour ${\mathbb R}$ (this is justified since the integrand is an analytic function in this region for any $z_1$).
  To finish the proof we separate the logarithms 
  \beqq
  \log\left(1+\frac{\nu}{2}(u^2+\beta^2) \right)-\log(1+i\nu z_1)=\log(u+i\alpha)+\log(u-i\alpha)-\log\left(\frac{2}{\nu}+2iz_1\right),
  \eeqq
  use the partial fractions decomposition
  \beqq
  \frac{1}{u^2+\beta^2-2iz_1}=\frac{1}{2i\sqrt{\beta^2-2iz_1}}\left[\frac{1}{u-i\sqrt{\beta^2-2iz_1}}-\frac{1}{u+i\sqrt{\beta^2-2iz_1}} \right]
  \eeqq 
  and we obtain the six integrals, which can be computed by shifting the contours of integration 
   and using the following  Fourier transform formulas (see Gradshteyn-Ryjik...)
   \beqq
   \int\limits_{i\epsilon+{\mathbb R}} \log\left(1+\frac{iy}{b}\right)e^{ixy}\frac{dy}{y}&=&-2\pi i Ei(-bx), \;\;\; b>0  \\
   \int\limits_{i(b+\epsilon)+{\mathbb R}} \log\left(\frac{iy}{b}-1\right)e^{ixy}\frac{dy}{y}&=&-2\pi i Ei(bx), \;\;\; b>0
   \eeqq
  \end{proof}

    {\bf Remark:} Using the change of variables $u=i\sqrt{\beta^2-2iz}$ and 
simplifying the expression we can obtain a simpler formula
  for $p_1^*(x_0;s,x_1)$:
    \beqq
p_1^*(x_0;s,x_1)&=&e^{\beta(x_1-x_0)}\frac{\left(\frac{\nu}2\right)^{-\frac{s}{\nu}-1}}{2\pi
i }\int\limits_{{\mathbb R}}
\left(\alpha^2+y^2\right)^{-\frac{s}{\nu}}
\sin\left(x_0y \right)  e^{i\mid
x_1\mid y} Ei\left(-\mid x_1
\mid (\alpha+iy) \right)
  dy.
  \eeqq
Applying the Plancherel formula to the above expression gives us the following representation for $p_1^*$: 
 \beq\label{eq_ptilde2}
p_1^*(x_0;s,x_1)&=&e^{\beta(x_1-x_0)-\alpha(x_0+\mid
x_1 \mid)}\frac{\left(\frac{\nu \alpha^2}{2}
\right)^{-\frac{s}{\nu}}}{\sqrt{\pi}\nu(x_0+\mid x_1
\mid)}\frac{\Gamma\left(\frac{s}{\nu}+\frac12\right)}{\Gamma\left(\frac{s}{\nu}+1\right)}\\
\nonumber
&+&\sqrt{\frac{2\alpha^3}{\pi}}e^{\beta(x_1-x_0)-\alpha\mid
x_1 \mid } \frac{\left(\alpha \nu
\right)^{-\frac{s}{\nu}-1}}{\Gamma\left(\frac{s}{\nu} \right) }
\int\limits_{0}^{\infty} u^{\frac{s}{\nu}-\frac12}
K_{\frac{s}{\nu}-\frac12} \left(\alpha u\right) f(x_0,x_1;u)
  du,
 \eeq
 where
 \beqq
 f(x_0,x_1;u)=\frac{e^{-\alpha
\left(u+x_0\right)}}{u+x_0+\mid x_1
\mid}-{\textrm{sign}}\left(u-x_0\right)
\frac{e^{-\alpha {\big\vert
u-x_0\big\vert}}}{\big\vert 
u-x_0\big\vert+\mid x_1
\mid}-2\frac{e^{-\alpha\left(u+x_0\right)}}{x_0+\mid
x_1 \mid}.
 \eeqq
The above expression is useful for computations when $s$ is small. In particular, when $s=0$ we find 
\beq\label{pstars0}
p_1^*(x_0;0,x_1)=\frac{e^{\beta(x_1-x_0)-\alpha(x_0+\mid
x_1 \mid)}}{\nu(x_0+\mid x_1
\mid)}.
\eeq

 \section{Numerical implementation for VG model}

The algorithm for computing the functions $p^*(x_0;s,x)$ and $p^*(x_0;s)$ can be summarized as follows:
 \begin{enumerate}
  \item Choose the discretization step sizes $\delta_x$, $\delta_t$ and discretization intervals $[-X,X]$, $[0,T]$. 
 The grid points are $t_i=i\delta_t$, $1 \le i \le N_s$ and $x_j = (j+1/2) \delta_x$, $-N_x\le j \le N_x$.
 \item Compute the 3D array $p^*_1(x_i; t_j, x_k)$. For $j>0$ use equation (\ref{eq_ptilde1}) and for $j=0$ use explicit formula (\ref{pstars0}).
 \item Iterate equation (\ref{p_iteration}) or \eqref{pdf_iteration}. This step can be considerably accelerated if the convolution in $u$-variable is done using Fast Fourier Transform methods.  We used the midpoint rule for integration in the $y$ and $u$ variables.
 \end{enumerate}

Theorem \ref{thm_exp_convgce} implies that  Step 3 in the above algorithm has to be repeated only a few times: In practice we found that 3-4 iterations is usually enough.
 An important empirical fact is that the above algorithm works quite well with just a few discretization points  in the $x$-variable. We found that if one uses  a non-linear grid (which places more 
 points $x_i$ near $x=0$) then the above algorithm produces reasonable results with values of $N_x$ as small as  10 or 20.

  \begin{figure}\label{fig2}
\begin{center}
 \includegraphics[width=0.45\textwidth]{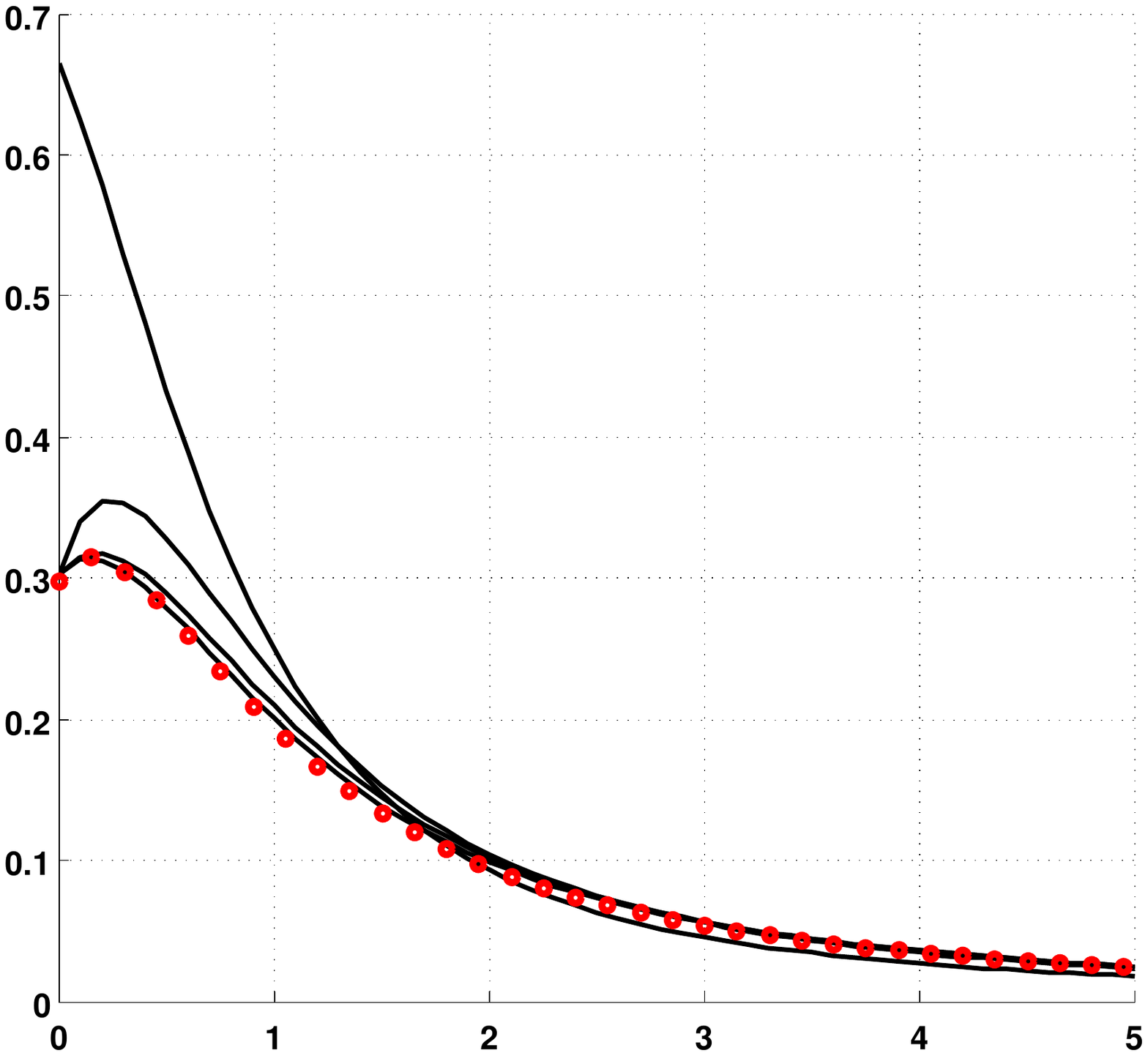}
\includegraphics[width=0.45\textwidth]{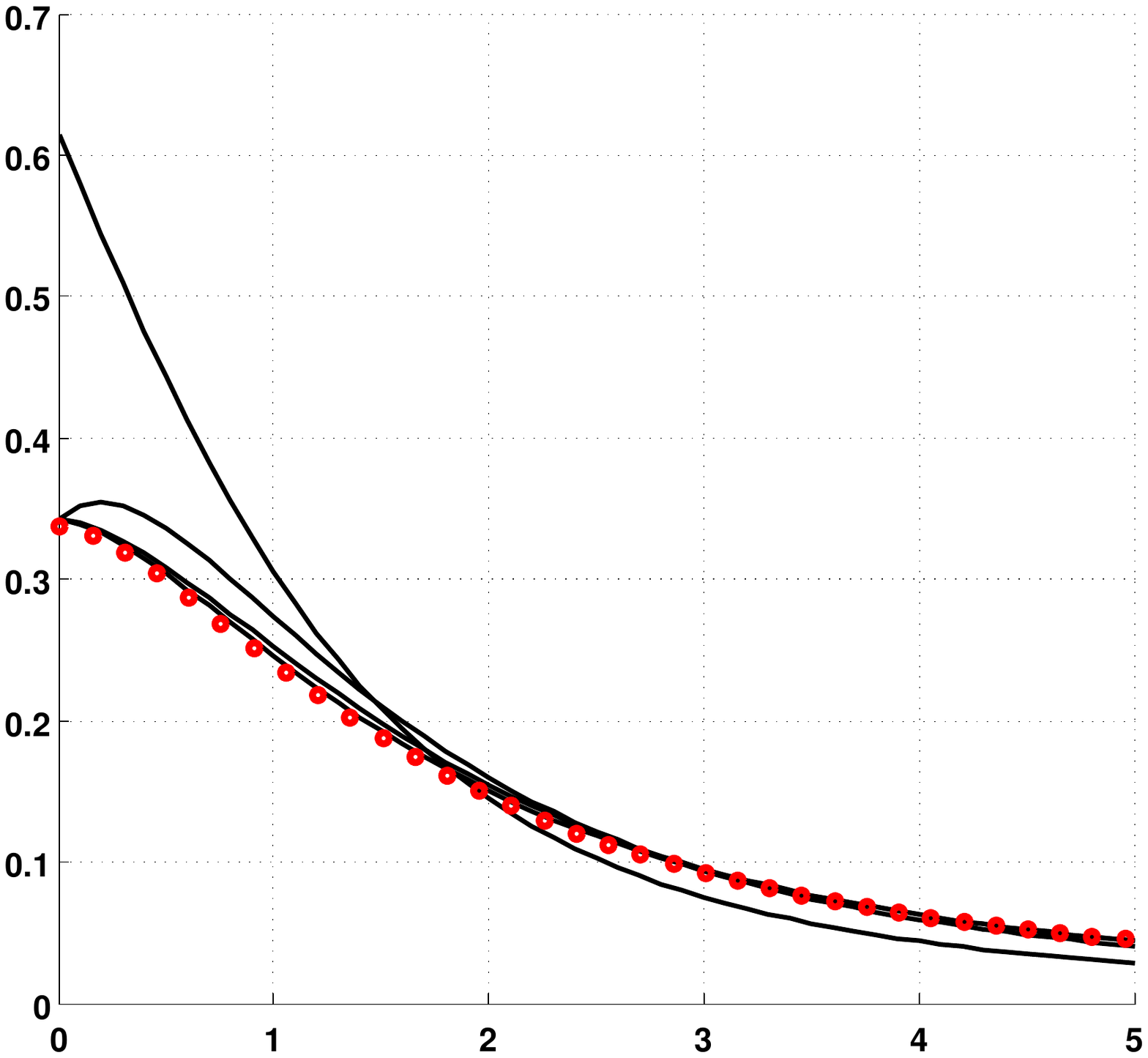}

\vspace{0.5cm} \caption{The density of the first passage time for the two set of parameters. The circles show the ``exact'' result; the three solid lines show the first three approximations.}
\end{center}
\end{figure}

We compared our algorithm for the PDF $p^*(x_0;s)$ to a finite-difference method that was implemented as follows. First we approximated the first passage time by its discrete counterpart:
 \beqq
 \hat t^*=\hat t^*(x)=\min\{t_i : X_{t_i}<0 | X_0=x \}
\eeqq  
where $t_i=i\delta_t$, $0\le i \le n_t$ is the discretization of the interval $[0,T]$. The probabilities $f_i(x)=P(\hat t^* > t_i | X_0=x)$ satisfy the iteration:
 \be\label{pide_iteration}
  f_{i+1}(x)={\bf 1}_{x>0}\int \limits_{{\mathbb R}} p(\delta_t, x-y) f_i(y)dy,  i\ge 1
 \ee
with $f_{0}(x)={\bf 1}_{x>0}$ and can be computed numerically with the following steps:
\begin{enumerate}
 \item discretize the space variables $x=i \delta_x$, $y=j \delta_x$, $0<i,j<n_x$;
 \item compute the array of transitional probabilities $\hat p_i=p(\delta_t,x_i)$, and normalize $\hat p_0$ so that $\sum_i \hat p_i=1$; 
 \item use the convolution (based on FFT) to iterate equation (\ref{pide_iteration}) $n_t$ times;
 \item compute the approximation of the first passage time density $\hat p^*(x,t_i+\delta_t/2)=(f_{i+1}(x)-f_i(x))/\delta_t$.  
\end{enumerate}

The big advantage of this method is that it is explicit and unconditionally stable: we can choose the number of discretization points in $x$-space and $t$-space independently. This is not true in general explicit finite difference methods, where one would solve the Fokker-Plank equation by discretizing the Markov generator and derivative in time, since $\delta_t$ and $\delta_x$ have to lie in a certain subset in order for the methods to be stable.

Figure \ref{fig2}  summarizes the numerical results for the PDF $p^*(x_0;s)$ over the time interval $[0,5]$ for the VG model with the following two sets of parameters:
 \beqq
&& \mbox{Set I:}\;\;\; x_0=0.5, \;\; \beta = 0.2, \;\; \nu=1 \\
&&\mbox{Set II:} \;\; x_0=0.5, \;\; \beta = -0.2, \;\; \nu=2 .
\eeqq 
 The number of grid points used was $N_t=50$ and $N_x=10$.  The red circles correspond to the solution obtained by a high resolution finite difference PIDE method as described above (with $n_t=1000$ and $n_x=10 000$), and the 
black lines show successive iterations $p^*_i(x_0,t)$ converging to $p^*(x_0,t)$. As we see, 3 iterations of equation (\ref{pdf_iteration}) provide a visually acceptable accuracy in a running time of less than 0.1 second (on a 2.5Ghz laptop).

   \begin{figure}\label{fig3}
\begin{center}
 \includegraphics[width=0.45\textwidth]{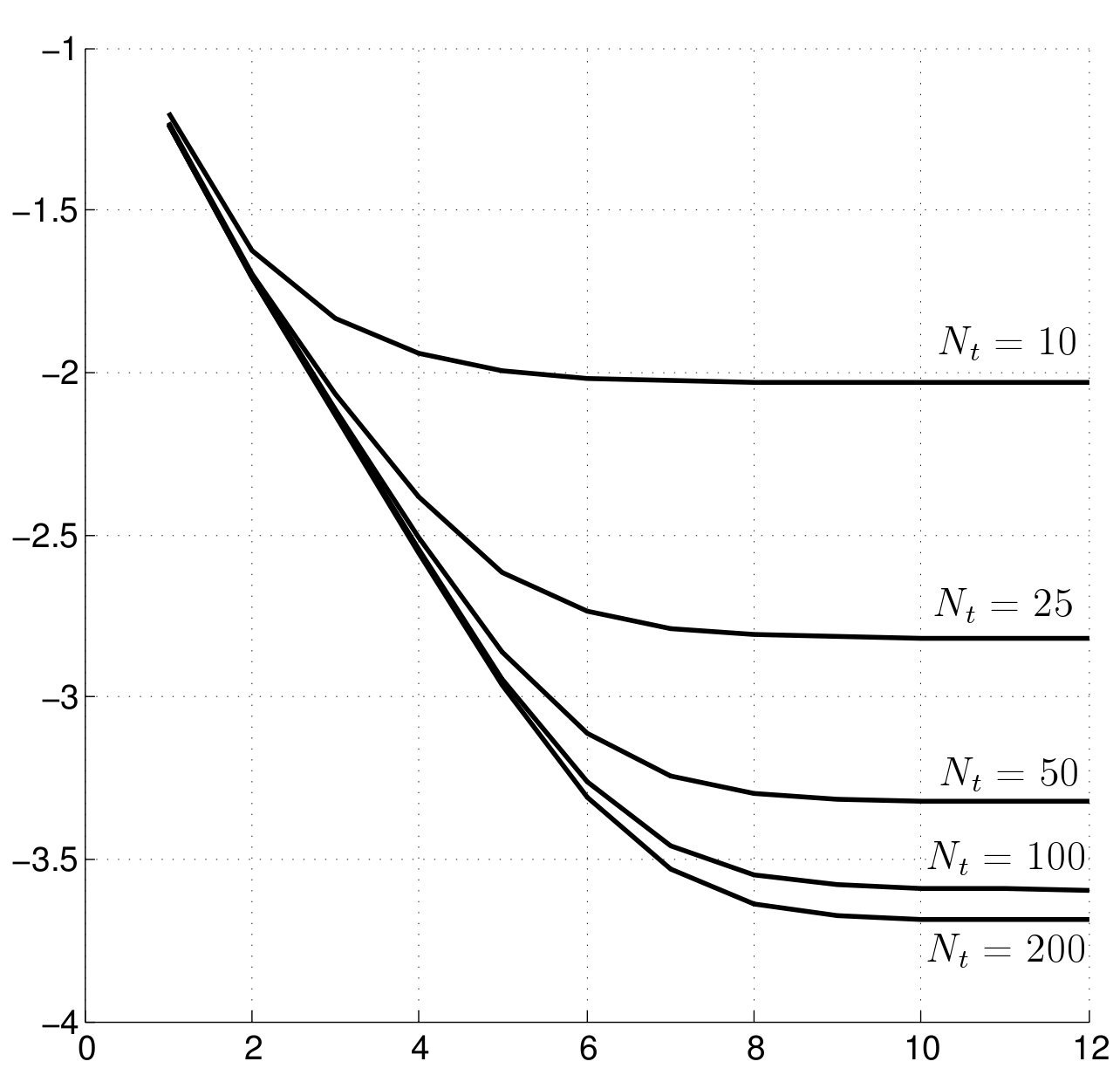}
\includegraphics[width=0.45\textwidth]{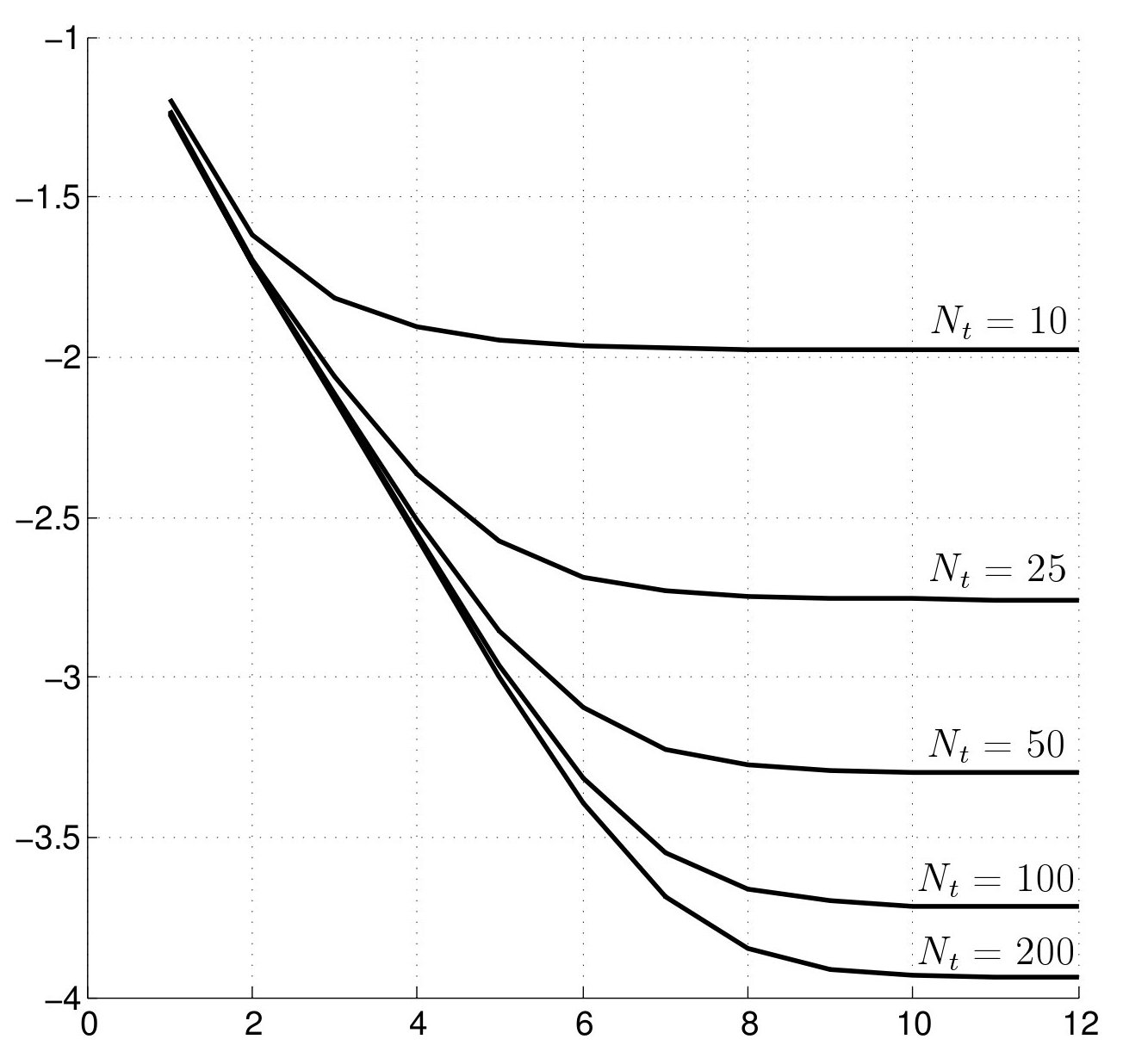}
\vspace{0.5cm} \caption{Vertical axis is the $\log_{10}\left(||p^*-p^*_i||_{L_1}\right)$, horizontal axis is the number of iterations. $N_x=10$ (left) and $N_x=20$ (right) and $N_t \in \{10, 25, 50, 100, 200\}$ }
\end{center}
\end{figure}

 Figure \ref{fig3} illustrates the convergence of our method and Table 1 shows the computation times (on the same 2.5Ghz laptop). We used Set II of parameters for the VG process, and the PIDE method with  $n_t=1000$ and $n_x=10 000$ to compute the ``exact'' solution  $p^*(x_0,t)$. 	Figure \ref{fig3} shows the $\log_{10}$ of the error  
 \beqq
||p^*-p^*_i||_{L_1}=\int\limits_0^T | p^*(x_0,t)-p^*_i(x_0,t) | dt
\eeqq
on the vertical axis and the number of iterations on the horizontal axis; different curves correspond to different number of discretization points in $t$-space. The number of discretization points in $x$-space is fixed at $N_x=10$ for the left picture and $N_x=20$ for the right picture. We see that initially the error decreases exponentially and then flattens out. The flattening indicates that our method converges to the wrong target (which is to be expected since there is always a discretization error coming from $N_x$ and $N_t$ being finite). However, increasing $N_t$ and $N_x$ brings us closer to the ``target''. In the table 1 we show precomputing time needed to compute the 3D array $p^*_1(x_i; t_j, x_k)$ and the time needed to perform each iteration  (\ref{pdf_iteration}).

\begin{table}[!ht]\label{tab1}
\centering
\label{table_comp_times_our_method}
\bigskip
\begin{tabular}{|c|c||c|c|c|c|c|c|}
\hline
 	 			  &     &$N_t=10$  & $N_t=25$ & $N_t=50$ & $N_t=100$ & $N_t=200$  \\ \hline \hline
  {\textrm precomputing time}	  &   $N_x=10$   &0.0313    & 0.0259 & 0.0324 & 0.0461  & 0.0687 \\ \hline
  {\textrm each iteration}	  &   $N_x=10$   &0.0006  & 0.0008 & 0.0011 & 0.0021  & 0.0046 \\ \hline \hline
  {\textrm precomputing time}	  &   $N_x=20$   &0.0645   & 0.0612 & 0.0745 & 0.0868  & 0.1298 \\ \hline
  {\textrm each iteration}	  &   $N_x=20$   &0.0037  & 0.0045 & 0.0066 & 0.0120  & 0.0269 \\ \hline 		
\end{tabular}
\caption{Computation time (sec) for the new approach}
\end{table}

   \begin{figure}\label{fig4}
\begin{center}
 \includegraphics[width=0.55\textwidth]{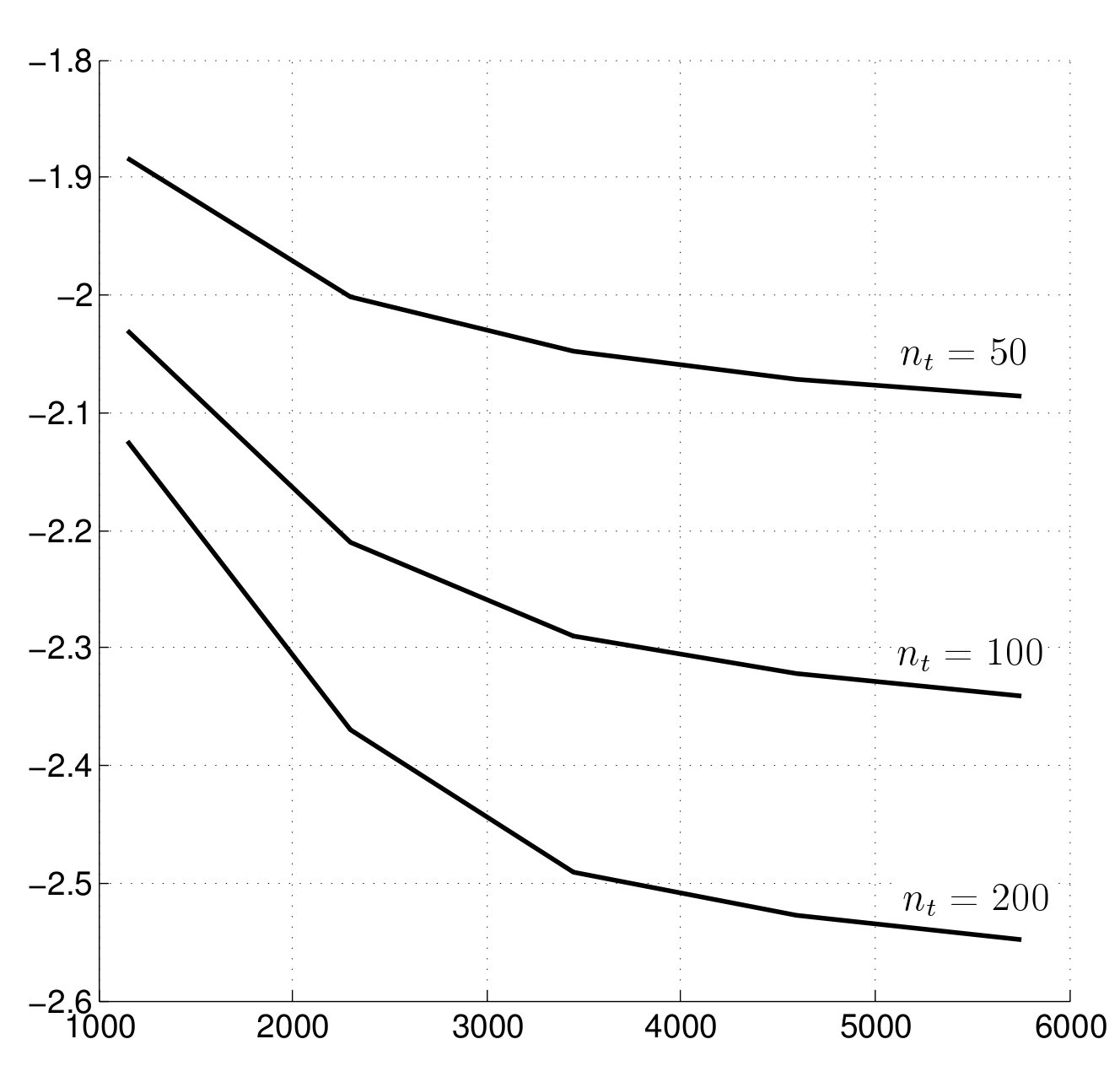}
\vspace{0.5cm} \caption{Vertical axis is the $\log_{10}\left(||p^*-\hat p^*||_{L_1}\right)$, horizontal axis is $n_x$. The $n_t$ is  in the set $\{50, 100, 200\}$}
\end{center}
\end{figure}

To put these results into perspective, on Figure \ref{fig4} and Table 2 we present similar results for finite difference method. On Figure \ref{fig4} we show the same logarithm of the error on the vertical axis, and the number of discretization points $n_x$ on the horizontal axis. Different curves correspond to $n_t \in \{50, 100, 200\}$. The running time presented in Table 2 includes only the time needed to perform $n_t$ convolutions (\ref{pide_iteration}) using the FFT. As we see, the finite difference method is substantially slower than our method.

\begin{table}[!ht]\label{tab2}
\centering
\label{table_comp_times_PIDE_method}
\bigskip
\begin{tabular}{|c||c|c|c|c|c|}
\hline
              &$n_x=1150$  & $n_x=2300$ & $n_x=3450$ & $n_x=4600$ & $n_x=5750$   \\ \hline \hline
   $n_t=50$   &0.0756    & 0.2935 & 0.9582 & 1.8757  & 2.9967 \\ \hline
    $n_t=100$   &0.1456  & 0.5821 & 1.9397 & 3.7409  & 6.0026 \\ \hline 
    $n_t=200$   &0.2870   & 1.1478 & 3.8833 & 7.4768  & 11.9935 \\ \hline		
\end{tabular}
\caption{Computation time (sec) for the finite difference approach.}
\end{table}

 \section{Conclusions}
 First passage times are an important modeling tool in finance and other areas of applied mathematics. 
 The main result of this paper is the theoretical connection between two distinct notions of first passage time that arise for L\'evy subordinated Brownian motions. This relation leads to a new way to compute true first passage for these processes that is apparently less expensive than finite difference methods for a given level of accuracy. Our paper opens up many avenues for further theoretical and numerical work. For example, the methods we describe are certainly applicable for a much broader class of time changed Brownian motions and time changed diffusions. Finally, it will be worthwhile to explore the use of the first passage of the second kind is a modeling alternative to the usual first passage time.   
 
 \bibliographystyle{plain}

\begin{thebibliography}{10}

\bibitem{AlilKypr05}
L.~Alili and A.~E. Kyprianou.
\newblock Some remarks on first passage of {L}\'evy processes.
\newblock {\em Ann. Appl. Probab.}, 15:2062Ð2080, 2005.

\bibitem{Applebaum04}
David Applebaum.
\newblock {\em L\'evy processes and stochastic calculus}, volume~93 of {\em
  Cambridge Studies in Advanced Mathematics}.
\newblock Cambridge University Press, Cambridge, 2004.

\bibitem{AsmAvrPis04}
S.~Asmussen, F.~Avram, and M.R. Pistorius.
\newblock Russian and american put options under exponential phase-type lŽvy
  models.
\newblock {\em Stochastic Processes and their Applications}, 109:79--111, 2004.

\bibitem{Barn-Niel97}
O.~E. Barndorff-Nielsen.
\newblock Normal inverse {G}aussian distribution and stochastic volatility
  modelling.
\newblock {\em Scandinavian Journal of Statistics}, 24:1--13, 1997.

\bibitem{CherShir02}
A.~S. Cherny and A.~N. Shiryaev.
\newblock Change of time and measures for {L}\'evy processes.
\newblock Lectures for the Summer School ``From {L}\'evy Processes to
  Semimartingales: Recent Theoretical Developments and Applications to
  Finance'', Aarhus 2002, 2002.

\bibitem{GradRyzh00}
I.~S. Gradshteyn and I.~M Ryzhik.
\newblock {\em Tables of integrals series and products, 6th edition}.
\newblock Academic Press, 2000.

\bibitem{Hurd07a}
T.~R. Hurd.
\newblock Credit risk modelling using time-changed {B}rownian motion.
\newblock Working paper
  \verb+http://www.math.mcmaster.ca/tom/HurdTCBMRevised.pdf+, 2007.

\bibitem{JacoShir87}
J.~Jacod and A.~N. Shiryaev.
\newblock {\em Limit theorems for stochastic processes}.
\newblock Springer-Verlag, Berlin, 1987.

\bibitem{KouWang03}
S.~G. Kou and H.~Wang.
\newblock First passage times of a jump diffusion process.
\newblock {\em Adv. in Appl. Probab.}, 35(2):504--531, 2003.

\bibitem{MadaSene90}
D.~Madan and E.~Seneta.
\newblock The {VG} model for share market returns.
\newblock {\em Journal of Business}, 63:511--524, 1990.

\bibitem{PercRogo69}
E.~A. Percheskii and B.~A. Rogozin.
\newblock On the joint distribution of random variables associated with
  fluctuations of a process with independent increments.
\newblock {\em Theory Probab. Appl.}, 14:410Ð423, 1969.

\end{thebibliography}

\end{document}